\definecolor{labelkey}{cmyk}{1,1,1,.8}
\definecolor{refkey}{cmyk}{1,1,1,.8}
\definecolor{labelkey}{gray}{0.2} 
\definecolor{refkey}{gray}{0.2} 
\definecolor{labelkey}{rgb}{0,0,1}
\definecolor{refkey}{rgb}{0,0,1}
\definecolor{citekey}{gray}{.05}
\newtheorem{thm}{Theorem}[section]
\newtheorem{lemma}[thm]{Lemma}
\newtheorem{proposition}[thm]{Proposition}
\newtheorem{definition}[thm]{Definition}
\newtheorem{corollary}[thm]{Corollary}
\newtheorem{example}[thm]{Example}
\newtheorem{exercise}[thm]{Exercise}
{\theoremstyle{definition}
\newtheorem{remark}[thm]{Remark}}
\newcommand{\bth}{\begin{theorem}}
\newcommand{\ethe}{\end{theorem}}
\newcommand{\bre}{\begin{remark}\em }
\newcommand{\ere}{\end{remark}}
\newcommand{\ble}{\begin{lemma}}
\newcommand{\ele}{\end{lemma}}
\newcommand{\bde}{\begin{definition}}
\newcommand{\ede}{\end{definition}}
\newcommand{\bco}{\begin{corollary}}
\newcommand{\eco}{\end{corollary}}
\newcommand{\bpr}{\begin{proposition}}
\newcommand{\epr}{\end{proposition}}
\newcommand{\bexer}{\begin{exercise}}
\newcommand{\eexer}{\end{exercise}}
\newcommand{\bexam}{\begin{example}\rm }
\newcommand{\eexam}{\end{example}}
\newcommand{\bali}{\begin{align}}
\newcommand{\eali}{\end{align}}
\renewcommand\d{{\mathrm d}}
\renewcommand\d{\operatorname{\mathrm d}\!}
\newcommand\C{{\mathbb{C}}}
\newcommand\R{{\mathbb{R}}}
\newcommand\N{{\mathbb{N}}}
\newcommand\Z{{\mathbb{Z}}}
\renewcommand\P{{\operatorname{\mathsf{P}}}}
\newcommand\E{{\operatorname{\mathsf{E}}}}
\newcommand\ii[1]{{\,\operatorname{\mathsf{I}}\{#1\}}}
\renewcommand\ii{{\,\operatorname{\mathsf{I}}}}
\newcommand{\sgn}{\operatorname{sgn}}
\newcommand{\fl}[1]{\lfloor#1\rfloor}
\newcommand{\ce}[1]{\lceil#1\rceil}
\newcommand\D{{\operatorname{D}}}
\renewcommand{\Re}{\operatorname{\mathfrak{Re}}}
\renewcommand{\Im}{\operatorname{\mathfrak{Im}}}
\newcommand{\pv}{\mathrm{p.v.}}
\newcommand\G{{\mathcal{G}}}
\newcommand{\tR}{{\tilde{R}}}
\newcommand{\al}{\alpha}
\newcommand{\ga}{\gamma}
\newcommand{\Ga}{\Gamma}
\newcommand{\de}{\delta}
\newcommand{\De}{\Delta}
\newcommand{\la}{\lambda}
\newcommand{\ka}{\kappa}
\newcommand{\si}{\sigma}
\newcommand{\vep}{\varepsilon}
\newcommand\ov{\overline}
\newcommand{\lhalfeq}[1]{\mbox{$#1=$\kern-3.5pt{\textcolor{white}{\rule[1pt]{4pt}{3.4pt}}} } }
\newcommand{\rhalfeq}[1]{\mbox{$#1=$\kern-7.9pt{\textcolor{white}{\rule[1pt]{4pt}{3.4pt}}} } }
\renewcommand{\lhalfeq}[1]{\mbox{\sbox0{$#1\vcenter{}$}\raisebox{\dimexpr\height-2\ht0\relax}[2\dimexpr\height-\ht0\relax][0pt]{$\m@th=$}\kern-3.5pt{\textcolor{white}{\rule[-1pt]{4pt}{3.8pt}}} } }
\renewcommand{\rhalfeq}[1]{\mbox{\sbox0{$#1\vcenter{}$}\raisebox{\dimexpr\height-2\ht0\relax}[2\dimexpr\height-\ht0\relax][0pt]{$\m@th=$}\kern-7.9pt{\textcolor{white}{\rule[-1pt]{4pt}{3.8pt}}} } }
\newcommand{\gtheq}{\mathrel{\mathpalette\xgtheq\relax}}
\newcommand{\xgtheq}[2]{%
  \vcenter{\hbox{%
    \oalign{$#1>$\cr\lhalfeq{#1}\hidewidth\cr}%
  }}
}
\newcommand{\ltheq}{\mathrel{\mathpalette\xltheq\relax}}
\newcommand{\xltheq}[2]{%
  \vcenter{\hbox{%
    \oalign{$#1<$\cr
    \rhalfeq{#1}\cr}%
  }}\kern-3pt
}
\title{Positive-part moments via the characteristic functions, and more general expressions} 
\author{ Iosif Pinelis\footnote{Department of Mathematical Sciences, 
Michigan Technological University, 
1400 Townsend Drive, 
Houghton, Michigan 49931, 
U.S.A. E-mail: 
ipinelis@mtu.edu. }
}
\begin{document}
\maketitle

\begin{abstract}
A unifying and generalizing approach to representations of the positive-part and absolute moments ${\operatorname{\mathsf{E}}} X_+^p$ and ${\operatorname{\mathsf{E}}}|X|^p$ of a random variable $X$ for real $p$ in terms of the characteristic function (c.f.) of $X$, as well as to related representations of the c.f.\ of $X_+$, generalized moments ${\operatorname{\mathsf{E}}} X_+^p e^{iuX}$, truncated moments, and the distribution function is provided. Existing and new representations of these kinds are all shown to stem from a single basic representation. Computational aspects of these representations are addressed. 

{\it Keywords:} 
characteristic functions, positive-part moments, 
absolute moments, 
truncated moments, fractional derivatives. 
\vspace{2mm}\\
2010 Mathematics Subject Classification: Primary 60E10, Secondary 60E07; 62E15; 60E15; 91B30

%
\end{abstract}

\pagecolor{white}

\tableofcontents

\section{Introduction}
Fourier analysis is a powerful tool, used widely in mathematics, sciences, and engineering. In probability theory, the Fourier transform of (the distribution of) a random variable (r.v.) is called the characteristic function (c.f.), which has a number of well-known useful properties, including the following: (i) the c.f.\ of the sum of independent r.v.'s is the product of the c.f.'s of the summands; (ii) in view of the various inversion formulas, any distribution is completely determined by its c.f.; (iii) the moments of a distribution (when they exist) can be found by (possibly repeated) differentiation of the c.f. at $0$. 

These properties of the c.f.\ have been used extensively to derive various versions of the central limit theorem and its refinements; see e.g.\ \cite{esseen42,zolotarev67,prawitz75-2,tyurin11,shev11,more-nonunif}. The c.f.\ is a natural tool of choice in studies of L\'evy processes and infinitely divisible distributions, in particular stable laws, which are widely used in modeling in finance. The famous Spitzer identity \cite{spitzer}, which relates the distributions of the cumulative maxima of the partial sums of independent identically distributed r.v.'s with the distributions of the positive parts of those sums, is also naturally expressed in terms of c.f.'s. Expressions of the c.f.\ of the positive part $X_+:=0\vee X$ of a r.v.\ $X$ in terms of the c.f. of $X$ were recently given in \cite{c.f.-pos_publ}; in particular, that resulted in a more explicit form of the Spitzer identity. 

There have been a number of results that provide general expressions for absolute moments of a r.v.\ in terms of its c.f.; see e.g.\ \cite{zolot-mellin,bahr65,lukacs,kawata,pet75,wolfe:1975a,laue:1980}. Such results were used, in particular, to provide 
(i) bounds on the rate of convergence of the absolute moments of the standardized sums of independent r.v.'s to the corresponding moments of the standard normal distribution \cite{bahr-converg65}; 
(ii) upper bounds on the absolute moments of the sums of independent zero-mean r.v.'s \cite{bahr65,rosenthal_AOP}; 
(iii) explicit expressions of the absolute moments of stable laws \cite{zolot-mellin}; 
(iv) upper and lower bounds (differing from each other only by a constant factor) on the Lorentz norm of a r.v.\ \cite{braverman96}; 
(v) characterizations of a probability distribution in terms of certain related absolute moments \cite{braverman86,bms95}. 

In contrast with the case of the absolute moments, there appear to have been rather few papers providing expressions for the moments of the positive part $X_+$ of a r.v.\ $X$ in terms of the c.f.\ of $X$. In this regard, we can only mention articles \cite{zolot-mellin,brown:1970,pinelis:2011}. 

However, it should be clear that such results for $X_+$ are in many cases of 
greater value than those for $|X|$, since $|X|^p=X_+^p+(-X)_+^p$ for all $p>0$. 
In particular, positive-part moments are important in applications in finance -- see e.g.\ \cite{q-bounds-published,winzor} and many further references there. Indeed, $(X-K)_+$  and $(K-X)_+$ are, respectively, the current values of a call option and a put option given the current underlying stock price $X$ and the strike price $K$ of the option.
Also, the mentioned expressions in \cite{pinelis:2011} for the moments of the positive part $X_+$ of a r.v.\ $X$ in terms of the c.f.\ of $X$ were instrumental in the development in \cite{rosenthal_AOP} of  a calculus of variations of generalized moments of infinitely divisible distributions with respect to variations of the L\'evy characteristics, which in turn played a crucial role in obtaining exact Rosenthal-type bounds for sums of independent random variables.

In a large number of extremal problems of probability and statistics, it is the moments of the positive part of a r.v.\ (rather than those of the absolute value) that are of interest -- see e.g.\ \cite{eaton1,T2,pin98,bent-ap,
shaked-shanti,normal,asymm,pin-hoeff-published,q-bounds-published,left_publ,cones}. 
%
For instance, in \cite{q-bounds-published} a spectrum of coherent measures of financial risk was proposed and studied. These risk measures are upper bounds on the $(1-q)$-quantiles of a r.v.\ $X$ (with $q\in(0,1)$), which can be expressed by the following variational formula: 
\begin{equation}\label{eq:Q=inf}
Q_\al(X;q)=\inf_{t\in\R}\big[t+\big(\E(X-t)_+^\al/q\big)^{1/\al}\big],  
\end{equation} 
where $\al\in(0,\infty)$ is the risk sensitivity parameter. 
This spectrum of upper bounds on the quantiles of $X$ is naturally based on (and, in a certain sense, dual to) the previously studied spectrum of upper bounds on the tails of (the distribution of) $X$ given by the formula 
\begin{equation}\label{eq:P old}
P_\al(X;x)=\inf_{t\in\R}\frac{\E(X-t)_+^\al}{(x-t)_+^\al}; 
\end{equation}
see e.g.\ \cite{pin98,pin-hoeff-arxiv-reftoAIHP} and bibliography there. 
As was pointed out by 
Rockafellar and Uryasev \cite{rocka-ur02}, such variational representations are ``[m]ost important[] for applications''.  Indeed, such representations allow of a comparatively easy incorporation of the risk measures into more specialized optimization problems, with additional restrictions  
on the r.v.\ $X$; see \cite[Section~4.3]{q-bounds-published} for details. 

Clearly, the use of formulas such as \eqref{eq:Q=inf} and \eqref{eq:P old} requires fast and efficient calculations of the positive-part moments $\E(X-t)_+^\al$. In many cases, including the mentioned applications to risk assessment and management in finance, a good way to conduct such calculations is by using representations of the positive-part moments in terms of the c.f., such as the ones provided in the present paper; cf.\ also the discussion on the computational effectiveness in the introduction in \cite{pinelis:2011}. The important computational aspects of the representations obtained in the present paper will be addressed in detail in Section~\ref{comput}, which puts it in distinction with other papers in this area. 

Another distinctive feature of this paper is a unifying and generalizing method of obtaining representations of the positive-part moments moments in terms of the c.f., based on little more than a simple idea of homogeneity. This quickly gives us Theorem~\ref{th:homo}, from which all the other results presented in this paper follow, both the existing and new ones. In particular, the mentioned expression of the c.f.\ of $X_+$ in terms of the c.f.\ of $X$ given in \cite{c.f.-pos_publ} is shown to be a corollary to Theorem~\ref{th:homo}; the same is true concerning other known results, including Gurland's inversion formula \cite{gurland:1948}, expressing the distribution function in terms of the c.f. 

The only problem that Theorem~\ref{th:homo} leaves out is that of the evaluation of the constant factors $c_p^\pm(g)$ in \eqref{eq:X homo}. In a number of applications (e.g.\ in the characterization problems studied in \cite{braverman86,bms95,braverman93,braverman96}), these constant factors are of no importance, and some known representation formulas for absolute moments in terms of the c.f.\ (such as the ones in \cite[Theorem~11.4.4]{kawata} and \cite[Theorem~2]{wolfe:1973}) leave the constant factors unevaluated. 

In many cases, though, explicit values of the constants are important. We shall see that for all the results presented in the paper, the constant factors can be evaluated in a more or less straightforward manner based just on \cite[Corollary~2]{pinelis:2011} specialized to the case when the r.v.\ $X$ takes only one real value.  

Thus, we provide a completely unified approach to the problem of representing the positive-part moments (and hence the absolute moments) in terms of the c.f. In fact, we also provide similar representations for the moments of the more general form $\E X_+^p e^{iuX}$ for any real $p$ and $u$
, as well as for truncated moments. 

\section{General results}

Take any real $p$. Let $\G_p$ denote the class of all locally integrable Borel-measurable functions $g\colon\R\to\C$ such that 
$g(0)=0$ and there exist finite limits 
\begin{equation}\label{eq:c^pm}
	c^+_p(g):=\int_{0+}^{\infty-} \frac{g(t)}{t^{p+1}}\d t\quad\text{and}\quad 
	c^-_p(g):=\int_{0+}^{\infty-} \frac{g(-t)}{t^{p+1}}\d t.
\end{equation}
Here and in what follows, 
$\int_{0+}^{\infty-}:=\lim_{\vep\downarrow0,\,T\uparrow\infty}\int_\vep^T$.  
Similarly,  $\int_0^{\infty-}:=\lim_{T\uparrow\infty}\int_0^T$ and  $\int_{0+}^\infty:=\lim_{\vep\downarrow0}\int_\vep^\infty$. 
%
Whenever a claim about an integral of the form $\int_a^b$ for some $a$ and $b$ such that $0\le a<b\le\infty$ is made in this paper, it includes by default the statement 
that the integral exists in the Lebesgue sense (and is finite). 

\smallskip
Obviously, if a function $g\in\G_p$ is even, then $c^-_p(g)=c^+_p(g)$; and 
if $g\in\G_p$ is odd, then $c^-_p(g)=-c^+_p(g)$. 

Note also that any $g\in\G_p$ will necessarily satisfy the boundedness condition 
\begin{equation}\label{eq:bounded}
	S_p(g):=\sup\Big\{\Big|\int_\vep^T \frac{g(tx)}{t^{p+1}}\d t\Big|
\colon0<\vep<T<\infty\text{ and }x\in\{1,-1\}\Big\}<\infty. 
\end{equation}

Let also 
\begin{gather*}
	x_+^p:=
	\begin{cases}x^p&\text{ if }x>0,\\0&\text{otherwise,}\end{cases} \qquad
	x_-^p:=(-x)_+^p=\begin{cases}(-x)^p&\text{ if }x<0,\\0&\text{otherwise},\end{cases} \qquad
	x^{[p]}:=x_+^p-x_-^p.  
\end{gather*}
Here and in the sequel, $x$ stands for an arbitrary real number (unless specified otherwise). 
Note that 
\begin{equation}\label{eq:|x|^p}
	|x|^p=x_+^p+x_-^p\ \text{if $x\ne0$ or $p\ge0$;} 
\end{equation}
however, $x_+^p+x_-^p=|x|^p\ii\{x\ne0\}$ for all real $x$ and $p$, so that 
$|x|^p=\infty\ne0=x_+^p+x_-^p$ if $x=0$ and $p<0$. 
Here and in the sequel, we assume the 
conventions  
\begin{equation}\label{eq:conventions}
	\text{$0^0=0$,\quad $0^p=\infty$ if $p<0$,\quad $0\cdot\infty=\infty\cdot0=0$,\quad and $a\cdot\infty=\infty\cdot a=\infty$ for real $a>0$. }
\end{equation}

The following simple proposition is our starting point. 

\begin{proposition}\label{prop:homo}
For any $g\in\G_p$ 
\begin{equation}\label{eq:homo}
	\int_{0+}^{\infty-} \frac{g(tx)}{t^{p+1}}\d t=c^+_p(g)x_+^p+c^-_p(g)x_-^p.  
\end{equation}
\end{proposition}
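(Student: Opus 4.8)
The plan is to reduce the identity to a single change of variables that exploits the scaling behaviour of the kernel $t^{-(p+1)}$, after splitting into the three cases $x>0$, $x<0$, and $x=0$.

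First I would dispose of the degenerate case $x=0$. Since $g(0)=0$ by the very definition of $\G_p$, the integrand $g(tx)/t^{p+1}=g(0)/t^{p+1}$ vanishes identically, so the left-hand side is $0$; and the right-hand side is $0$ as well, because $0_+^p=0_-^p=0$ and the finite constants $c^\pm_p(g)$ annihilate these under the conventions in \eqref{eq:conventions}.

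Next, for $x>0$ I would substitute $s=tx$ in the truncated integral $\int_\vep^T g(tx)\,t^{-(p+1)}\d t$. Because $\d t=\d s/x$ and $t^{-(p+1)}=(s/x)^{-(p+1)}=x^{p+1}s^{-(p+1)}$, one power of $x$ cancels and a factor $x^p$ is pulled out front, yielding $x^p\int_{\vep x}^{Tx} g(s)\,s^{-(p+1)}\d s$. Since $x>0$ is fixed, the new endpoints $\vep x$ and $Tx$ run to $0$ and $\infty$ independently as $(\vep,T)\to(0,\infty)$, so passing to the limit produces $x^p\,c^+_p(g)=x_+^p\,c^+_p(g)$; and since $x_-^p=0$ this equals the right-hand side of \eqref{eq:homo}. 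The case $x<0$ is entirely parallel: writing $x=-y$ with $y>0$ and substituting $s=ty$ turns $g(tx)$ into $g(-s)$ and gives $y^p\,c^-_p(g)=x_-^p\,c^-_p(g)$, while now $x_+^p=0$.

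The computation is routine, and I do not expect a genuine obstacle; the only two points deserving a word of care are the bookkeeping of the conventions of \eqref{eq:conventions} in the case $x=0$, and the verification that the double limit defining $\int_{0+}^{\infty-}$ transforms correctly under the substitution. Both are immediate once one observes that multiplication by a fixed positive number is a homeomorphism of $(0,\infty)$ onto itself fixing the endpoints $0$ and $\infty$, so the independent passages $\vep\downarrow0$ and $T\uparrow\infty$ are preserved. The Lebesgue-integrability on each truncated interval $[\vep,T]$ that legitimizes the substitution is inherited from the local integrability of $g$ built into the definition of $\G_p$ (applied separately to the real and imaginary parts of $g$), so no additional estimate is required.
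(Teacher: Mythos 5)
Your proposal is correct and follows essentially the same argument as the paper: dispose of $x=0$ via $g(0)=0$, then for $x\ne0$ perform the scaling substitution (the paper writes it uniformly as $u=t|x|$ with a $\sgn x$ in the argument of $g$, while you split into $x>0$ and $x<0$, which is the same computation). Your extra remarks on the preservation of the double limit under the substitution and on local integrability are sound and merely make explicit what the paper leaves implicit.
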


\begin{proof}
If $x=0$, \eqref{eq:homo} follows immediately from the condition $g(0)=0$. 
If $x\ne0$, \eqref{eq:homo} follows immediately by the change of variables: introducing $u:=t|x|$, one has 
\begin{equation*}
	\int_{0+}^{\infty-} \frac{g(tx)}{t^{p+1}}\d t
	=|x|^p	\int_{0+}^{\infty-} \frac{g(u\sgn x)}{u^{p+1}}\d u
	=c^+_p(g)x_+^p+c^-_p(g)x_-^p, 
\end{equation*}
by \eqref{eq:c^pm}. 
\end{proof}

\begin{remark}\label{rem:flip}
It follows from \eqref{eq:homo} that 
$\int_{0+}^{\infty-} \frac{g(-tx)}{t^{p+1}}\d t=c^+_p(g)x_-^p+c^-_p(g)x_+^p$. Solving the latter equation together with \eqref{eq:homo} for $x_+^p$ and $x_-^p$, one obtains integral expressions for $x_\pm^p$ and hence for $|x|^p$ and $x^{[p]}$:
\begin{align}
	x_+^p&=\frac1{c^+_p(g)^2-c^-_p(g)^2}
	\int_{0+}^{\infty-} \frac{c^+_p(g)g(tx)-c^-_p(g)g(-tx)}{t^{p+1}}\d t, \notag \\
	x_-^p&=\frac1{c^+_p(g)^2-c^-_p(g)^2}
	\int_{0+}^{\infty-} \frac{c^+_p(g)g(-tx)-c^-_p(g)g(tx)}{t^{p+1}}\d t, \notag \\ 
	|x|^p&=\frac1{c^+_p(g)+c^-_p(g)}
	\int_{0+}^{\infty-} \frac{g(tx)+g(-tx)}{t^{p+1}}\d t\quad\text{if $x\ne0$ or $p\ge0$}, \notag \\ 
	x^{[p]}&=\frac1{c^+_p(g)-c^-_p(g)}
	\int_{0+}^{\infty-} \frac{g(tx)-g(-tx)}{t^{p+1}}\d t, \notag 
\end{align}
provided that $c^+_p(g)^2\ne c^-_p(g)^2$. 
Similarly and more generally, 
\begin{equation}\label{eq:x_+^p=}
	x_+^p=\frac1{c^+_p(g_1)c^+_p(g_2)-c^-_p(g_1)c^-_p(g_2)}
	\int_{0+}^{\infty-} \frac{c^+_p(g_2)g_1(tx)-c^-_p(g_1)g_2(tx)}{t^{p+1}}\d t  
\end{equation}
for any $g_1$ and $g_2$ in $\G_p$ such that 
\begin{equation}\label{eq:ne0}
	c^+_p(g_1)c^+_p(g_2)\ne c^-_p(g_1)c^-_p(g_2). 
\end{equation}
Similar integral expressions for $x_-^p$, $|x|^p$, and $x^{[p]}$ can be easily obtained either directly or from \eqref{eq:x_+^p=}. 

In particular, if $g_1\in\G_p$ is odd and $g_2\in\G_p$ is even, then identity \eqref{eq:x_+^p=} can be rewritten as 
\begin{equation*}
	x_+^p=\frac1{2c^+_p(g_1)c^+_p(g_2)}
	\int_{0+}^{\infty-} \frac{c^+_p(g_1)g_2(tx)+c^+_p(g_2)g_1(tx)}{t^{p+1}}\d t,  
\end{equation*}
provided that 
\begin{equation}\label{eq:ne0,even-odd}
	c^+_p(g_1)c^+_p(g_2)\ne 0, 
\end{equation}
in which case 
\begin{align}
	x_-^p&=\frac1{2c^+_p(g_1)c^+_p(g_2)}
	\int_{0+}^{\infty-} \frac{c^+_p(g_1)g_2(tx)-c^+_p(g_2)g_1(tx)}{t^{p+1}}\d t, \notag \\ 
	|x|^p&=\frac1{c^+_p(g_2)}
	\int_{0+}^{\infty-} \frac{g_2(tx)}{t^{p+1}}\d t\quad\text{if $x\ne0$ or $p\ge0$}, \label{eq:|x|^p=int} \\ 
		x^{[p]}&=\frac1{c^+_p(g_1)}
	\int_{0+}^{\infty-} \frac{g_1(tx)}{t^{p+1}}\d t. \notag 
\end{align}
\end{remark}



Let $X$ be a real-valued r.v., and let $Y$ be a complex-valued r.v.  

\begin{thm}\label{th:homo} 
Suppose that 
\begin{equation}\label{eq:cond1}
	\E|Y|\,|X|^p<\infty 
\end{equation}
and a function  
$g\in\G_p$ satisfies the condition 
\begin{equation}\label{eq:cond2}
	\text{$\E|Y|\int_\vep^T |g(tX)|\d t<\infty$ for all real $\vep$ and $T$ such that $0<\vep<T$. } 
\end{equation}
Then
\begin{equation}\label{eq:X homo}
	\int_{0+}^{\infty-} \frac{\E Yg(tX)}{t^{p+1}}\d t
	=c^+_p(g)\E YX_+^p+c^-_p(g)\E YX_-^p. 
\end{equation}
\end{thm}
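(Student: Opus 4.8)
The plan is to transfer the pointwise identity of Proposition~\ref{prop:homo} to the level of expectations. First I would apply \eqref{eq:homo} with $x$ replaced by the random value $X=X(\omega)$: for every $\omega$,
\[
	\int_{0+}^{\infty-}\frac{g(tX)}{t^{p+1}}\,\d t=c^+_p(g)X_+^p+c^-_p(g)X_-^p .
\]
In particular, the improper integral on the left exists for each $\omega$. Multiplying by $Y$ and taking expectations, the right-hand side becomes $c^+_p(g)\E YX_+^p+c^-_p(g)\E YX_-^p$, which is finite because $X_\pm^p\le|X|^p$ and hence $\E|Y|X_\pm^p\le\E|Y|\,|X|^p<\infty$ by \eqref{eq:cond1}. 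Thus the entire content of the theorem reduces to justifying the interchange
\[
	\E\,Y\int_{0+}^{\infty-}\frac{g(tX)}{t^{p+1}}\,\d t=\int_{0+}^{\infty-}\frac{\E\,Yg(tX)}{t^{p+1}}\,\d t .
\]

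For fixed $\vep,T$ with $0<\vep<T<\infty$, I would first establish the ordinary (proper) Fubini interchange
\[
	\E\,Y\int_\vep^T\frac{g(tX)}{t^{p+1}}\,\d t=\int_\vep^T\frac{\E\,Yg(tX)}{t^{p+1}}\,\d t .
\]
Since $t\mapsto t^{-(p+1)}$ is bounded on the compact interval $[\vep,T]$, condition \eqref{eq:cond2} yields $\E|Y|\int_\vep^T|g(tX)|\,t^{-(p+1)}\,\d t<\infty$, so Fubini's theorem applies (to the real and imaginary parts of $Yg(tX)$ separately).

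The decisive step is passing to the limit $\vep\downarrow0$, $T\uparrow\infty$ under the expectation sign, and here the uniform bound \eqref{eq:bounded} is what makes everything work. For $X\ne0$ the change of variables $u:=t|X|$ gives
\[
	\Big|\int_\vep^T\frac{g(tX)}{t^{p+1}}\,\d t\Big|
	=|X|^p\,\Big|\int_{\vep|X|}^{T|X|}\frac{g(u\,\sgn X)}{u^{p+1}}\,\d u\Big|\le|X|^p\,S_p(g),
\]
the last inequality because $\sgn X\in\{1,-1\}$ and $0<\vep|X|<T|X|<\infty$; for $X=0$ the integral vanishes since $g(0)=0$, so the bound holds trivially. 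Consequently
\[
	\Big|Y\int_\vep^T\frac{g(tX)}{t^{p+1}}\,\d t\Big|\le|Y|\,|X|^p\,S_p(g)
\]
uniformly in $\vep,T$, and the dominating random variable $|Y|\,|X|^p\,S_p(g)$ is integrable by \eqref{eq:cond1}. Combined with the pointwise convergence $\int_\vep^T\to\int_{0+}^{\infty-}$ guaranteed for each $\omega$ by Proposition~\ref{prop:homo}, dominated convergence lets me pass to the limit; reducing the net limit in $(\vep,T)$ to arbitrary sequences $\vep_n\downarrow0$, $T_n\uparrow\infty$ makes the dominated-convergence theorem directly applicable and gives the same limit along every such sequence. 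Chaining the limit of the Fubini identity with this convergence yields \eqref{eq:X homo} and simultaneously shows that the left-hand integral in \eqref{eq:X homo} exists. I expect the main obstacle to be exactly this limit interchange: the integral $\int_{0+}^{\infty-}$ is in general only conditionally convergent, so monotone or termwise arguments are unavailable and one must produce the honest integrable majorant above — which is precisely why the homogeneity rescaling that reduces the bound to the single constant $S_p(g)$ is the crux.
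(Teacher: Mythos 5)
Your proof is correct and follows essentially the same route as the paper's own: Fubini's theorem on each finite interval $[\vep,T]$ justified by \eqref{eq:cond2}, the uniform bound $S_p(g)|X|^p$ obtained by the homogeneity rescaling in \eqref{eq:bounded}, and dominated convergence (with majorant $|Y|\,|X|^p S_p(g)$, integrable by \eqref{eq:cond1}) combined with the pointwise identity of Proposition~\ref{prop:homo}. The only additions---the net-to-sequence reduction for dominated convergence and the explicit remark that $t^{-(p+1)}$ is bounded on $[\vep,T]$---are details the paper leaves implicit.
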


\begin{proof}
By the change of 
variables, $u:=t|x|$, and in view of \eqref{eq:bounded}, 
\begin{equation}\label{eq:<S|x|^p}
	\Big|\int_\vep^T \frac{g(tx)}{t^{p+1}}\d t\Big|
	=|x|^p\,	\Big|\int_{|x|\vep}^{|x|T} \frac{g(u\sgn x)}{u^{p+1}}\d u\Big|
	\le S_p(g)|x|^p 
\end{equation}
for all real $\vep$ and $T$ such that $0<\vep<T$ and 
all real $x\ne0$. 
Since $g(0)=0$ and $S_p(g)\ge0$, the inequality $\Big|\int_\vep^T \frac{g(tx)}{t^{p+1}}\d t\Big|\le S_p(g)|x|^p$ holds for $x=0$ as well. 

Using now \eqref{eq:cond2}, the Fubini theorem, \eqref{eq:<S|x|^p} with the condition $S_p(g)<\infty$ in \eqref{eq:bounded}, \eqref{eq:cond1}, the dominated convergence theorem, and Proposition~\ref{prop:homo}, one concludes the proof as follows: for all real $\vep$ and $T$ such that $0<\vep<T$, 
\begin{equation}\label{eq:lim}
\begin{multlined}
	\int_\vep^T \frac{\E Yg(tX)}{t^{p+1}}\d t
=	\E Y\int_\vep^T \frac{g(tX)}{t^{p+1}}\d t  
\underset{\vep\downarrow0,\,T\uparrow\infty}\longrightarrow
\E Y\int_{0+}^{\infty-} \frac{g(tX)}{t^{p+1}}\d t \\ 
\rule{0pt}{13pt}=\E Y\big(c^+_p(g)X_+^p+c^-_p(g)X_-^p\big)
=c^+_p(g)\E YX_+^p+c^-_p(g)\E YX_-^p. 
\end{multlined} 	
\end{equation}
\end{proof}

Note that \eqref{eq:homo} is a special case of \eqref{eq:X homo}, with $Y=1$ and $X=x$. 

Similarly to Remark~\ref{rem:flip}, one obtains the following corollary of Theorem~\ref{th:homo}. 

\begin{corollary}\label{cor:E X_+^p} 
Suppose that the r.v.'s $X$ and $Y$ and functions $g_1$ and $g_2$ are such that the conditions of Theorem~\ref{th:homo} are satisfied when the function $g$ there is replaced by either $g_1$ and $g_2$. Then 
\begin{equation*}
	\E YX_+^p=\frac1{c^+_p(g_1)c^+_p(g_2)-c^-_p(g_1)c^-_p(g_2)}
	\int_{0+}^{\infty-} \frac{c^+_p(g_2)\E Yg_1(tX)-c^-_p(g_1)\E Yg_2(tX)}{t^{p+1}}\d t,  
\end{equation*}
provided that \eqref{eq:ne0} holds. 
The similar expressions for $\E YX_-^p$, $\E Y|X|^p$, and $\E YX^{[p]}$ also hold. 

In particular, when the functions $g_1$ and $g_2$ are odd and even, respectively, and the condition \eqref{eq:ne0,even-odd} holds, then 
\begin{align}
	\E YX_+^p&=\frac1{2c^+_p(g_1)c^+_p(g_2)}
	\int_{0+}^{\infty-} \frac{c^+_p(g_1)\E Yg_2(tX)+c^+_p(g_2)\E Yg_1(tX)}{t^{p+1}}\d t, \label{eq:E YX_+^p} \\ 
	\E YX_-^p&=\frac1{2c^+_p(g_1)c^+_p(g_2)}
	\int_{0+}^{\infty-} \frac{c^+_p(g_1)\E Yg_2(tX)-c^+_p(g_2)\E Yg_1(tX)}{t^{p+1}}\d t, \label{eq:E YX_-^p} \\ 
	\E Y|X|^p&=\frac1{c^+_p(g_2)}
	\int_{0+}^{\infty-} \frac{\E Yg_2(tX)}{t^{p+1}}\d t, \label{eq:E Y|X|^p} \\ 
		\E YX^{[p]}&=\frac1{c^+_p(g_1)}
	\int_{0+}^{\infty-} \frac{\E Yg_1(tX)}{t^{p+1}}\d t. \label{eq:E YX^{[p]}}
\end{align}
\end{corollary}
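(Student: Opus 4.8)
The plan is to obtain Corollary~\ref{cor:E X_+^p} from Theorem~\ref{th:homo} in exactly the way Remark~\ref{rem:flip} is obtained from Proposition~\ref{prop:homo}: the expectation $\E Y g(\,\cdot\,)$ now plays the role that $g(\,\cdot\,)$ played there, and all the analytic work has already been done in establishing \eqref{eq:X homo}. Concretely, I would first apply Theorem~\ref{th:homo} once with $g=g_1$ and once with $g=g_2$, which is legitimate because the hypotheses \eqref{eq:cond1}--\eqref{eq:cond2} are assumed for both functions. This yields
\begin{align*}
\int_{0+}^{\infty-}\frac{\E Y g_1(tX)}{t^{p+1}}\d t&=c^+_p(g_1)\,\E Y X_+^p+c^-_p(g_1)\,\E Y X_-^p,\\
\int_{0+}^{\infty-}\frac{\E Y g_2(tX)}{t^{p+1}}\d t&=c^+_p(g_2)\,\E Y X_+^p+c^-_p(g_2)\,\E Y X_-^p,
\end{align*}
which I would read as a $2\times2$ linear system in the two unknowns $\E Y X_+^p$ and $\E Y X_-^p$.

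The second step is to invert this system. To reproduce the stated determinant $c^+_p(g_1)c^+_p(g_2)-c^-_p(g_1)c^-_p(g_2)$, I would pair the $g_1$-identity with the companion obtained by running \eqref{eq:X homo} for the reflected function $x\mapsto g_2(-x)$ (equivalently, replacing $g_2(tX)$ by $g_2(-tX)$), which interchanges $c^+_p(g_2)$ and $c^-_p(g_2)$. Taking the combination $c^+_p(g_2)\times(\text{$g_1$-identity})-c^-_p(g_1)\times(\text{reflected $g_2$-identity})$ then annihilates the $\E Y X_-^p$ term and leaves precisely $c^+_p(g_1)c^+_p(g_2)-c^-_p(g_1)c^-_p(g_2)$ as the coefficient of $\E Y X_+^p$. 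Dividing by this coefficient --- which is nonzero exactly under the non-degeneracy hypothesis \eqref{eq:ne0} --- gives the formula for $\E Y X_+^p$. The companion formulas for $\E Y X_-^p$, $\E Y|X|^p$, and $\E Y X^{[p]}$ follow either by the symmetric elimination or, more cheaply, from $\E Y|X|^p=\E Y X_+^p+\E Y X_-^p$ (cf.\ \eqref{eq:|x|^p}, on $\{X\ne0\}$ or for $p\ge0$) and $\E Y X^{[p]}=\E Y X_+^p-\E Y X_-^p$.

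For the even/odd specialization I would not even need to invert a full system, since it decouples. When $g_2$ is even one has $c^-_p(g_2)=c^+_p(g_2)$, so the $g_2$-identity collapses directly to \eqref{eq:E Y|X|^p}; when $g_1$ is odd one has $c^-_p(g_1)=-c^+_p(g_1)$, so the $g_1$-identity collapses directly to \eqref{eq:E YX^{[p]}}; and then \eqref{eq:E YX_+^p}, \eqref{eq:E YX_-^p} drop out of $\E Y X_\pm^p=\tfrac12\big(\E Y|X|^p\pm\E Y X^{[p]}\big)$, with \eqref{eq:ne0,even-odd} being the specialization of \eqref{eq:ne0}. A pleasant point here is that the reflection used in the general step is automatic in this case: an even $g_2$ is unchanged under $x\mapsto-x$ and an odd $g_1$ only changes sign, so no integrability requirement beyond the assumed ones is needed.

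The computations themselves are routine linear algebra; the one place I would be careful is the bookkeeping of the reflection in the general (asymmetric) case, namely checking that \eqref{eq:cond1}--\eqref{eq:cond2} are available for the reflected function $g_2(-\,\cdot\,)$ as well as for $g_2$. Since \eqref{eq:cond2} is genuinely two-sided in $X$, it is not self-evidently preserved under $x\mapsto-x$ unless $X$ is symmetric or $g_2$ has a definite parity; this is the only substantive issue. Once it is in place, every displayed identity is a one-line consequence of \eqref{eq:X homo} together with the elimination described above.
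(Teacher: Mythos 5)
Your overall route is the paper's route: the paper obtains this corollary by precisely the elimination you describe (its text says ``similarly to Remark~\ref{rem:flip}''), and its written proof consists of a single point, discussed below. In the general (first) formula your treatment is in fact more scrupulous than the printed statement. You are right that the stated determinant $c^+_p(g_1)c^+_p(g_2)-c^-_p(g_1)c^-_p(g_2)$ can only come from pairing the $g_1$-identity with \eqref{eq:X homo} run for the reflected function $t\mapsto g_2(-t)$: eliminating from the two identities with $g_1(tX)$ and $g_2(tX)$ as literally printed yields the determinant $c^+_p(g_1)c^-_p(g_2)-c^-_p(g_1)c^+_p(g_2)$ instead, and the printed formula is actually false in general \big(take $g_1=g_2=g$ with $c^+_p(g)^2\ne c^-_p(g)^2$, so that \eqref{eq:ne0} holds; the right-hand side then equals $\big(c^+_p(g)\E YX_+^p+c^-_p(g)\E YX_-^p\big)\big/\big(c^+_p(g)+c^-_p(g)\big)$, not $\E YX_+^p$\big). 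So the display must be read with $\E Yg_2(-tX)$ in the numerator, consistently with the single-$g$ displays of Remark~\ref{rem:flip}; and your observation that \eqref{eq:cond2} for $g_2(-\,\cdot\,)$ is then an additional requirement -- vacuous under the parity assumptions of the second half of the corollary -- is legitimate and is passed over in silence by the paper.

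The one genuine gap in your proposal is exactly the point that constitutes the paper's entire written proof: the justification of \eqref{eq:E Y|X|^p} \big(and, on your route, also of \eqref{eq:E YX_+^p} and \eqref{eq:E YX_-^p}, since you produce them from $\E YX_\pm^p=\tfrac12\big(\E Y|X|^p\pm\E YX^{[p]}\big)$\big). You invoke $\E Y|X|^p=\E YX_+^p+\E YX_-^p$ with the caveat ``on $\{X\ne0\}$ or for $p\ge0$'' but never discharge it. For $p<0$ one has $X_+^p+X_-^p=|X|^p\ii\{X\ne0\}$, while $|X|^p=\infty$ on $\{X=0\}$ by the conventions \eqref{eq:conventions}; so the two sides of your identity differ by $\E Y|X|^p\ii\{X=0\}$, which must be shown to vanish when $\P(X=0)>0$. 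The paper closes this from \eqref{eq:cond1}: $\infty\cdot\E|Y|\ii\{X=0\}=\E|Y|\,|X|^p\ii\{X=0\}\le\E|Y|\,|X|^p<\infty$, whence $\E|Y|\ii\{X=0\}=0$ and therefore $\E Y|X|^p\ii\{X=0\}=0$. Adding this line completes your argument. Alternatively, you can obtain \eqref{eq:E YX_+^p}, \eqref{eq:E YX_-^p}, and \eqref{eq:E YX^{[p]}} directly as linear combinations of the two instances of \eqref{eq:X homo} -- these never mention $|X|^p$, so no atom issue arises there -- and reserve the above argument for \eqref{eq:E Y|X|^p} alone, which is exactly how the paper organizes its proof.
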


\begin{proof}
Here only identity \eqref{eq:E Y|X|^p} may need justification -- cf.\ \eqref{eq:|x|^p=int}. Note that the right-hand side of \eqref{eq:E Y|X|^p} is the sum of those of \eqref{eq:E YX_+^p} and \eqref{eq:E YX_-^p}, whereas the difference between the left-hand side of \eqref{eq:E Y|X|^p} and the sum of those of \eqref{eq:E YX_+^p} and \eqref{eq:E YX_-^p} is $\E Y|X|^p\ii\{X=0\}$. 

It remains to notice that this difference is $0$, in view of \eqref{eq:cond1} and \eqref{eq:conventions}. Indeed, this is clearly so when $p\ge0$. If $p<0$, then $\infty\cdot\E|Y|\ii\{X=0\}=\E|Y||X|^p\ii\{X=0\}\le\E|Y||X|^p<\infty$, whence $\E|Y|\ii\{X=0\}=0$, and so, $\E|Y||X|^p\ii\{X=0\}=0$. 
%
\end{proof}

Taking specific functions 
$g$, $g_1$, $g_2$ in Theorem~\ref{th:homo} and Corollary~\ref{cor:E X_+^p}, one obtains an infinite variety of expressions for the generalized moments $\E YX_+^p$, $\E YX_-^p$, $\E Y|X|^p$, and $\E YX^{[p]}$ in terms of the generalized moments of the form $\E Yg(tX)$; as will be discussed later in this paper, this variety includes both known and new results. 

Of course, for such expressions to be useful, the moments of the form $\E Yg(tX)$ should be significantly more easily treatable than the positive-part moments $\E YX_+^p$ and the like. 
For instance, if $Y=1$ and $g(t)$ is a polynomial in $t$, $e^{ivt}$, and
$e^{-ivt}$ for some nonzero real $v$, then $\E Yg(tX)$ is a linear
combination of the values of the derivatives of various orders of the
c.f.\ of $X$ at points of the lattice $v\Z$ of
all integer multiples of $v$. 
In what follows, 
in most cases (with the exception of Corollary~\ref{cor:odd n,0r0}) we shall assume that 
\begin{equation*}
	Y=h(X), 
\end{equation*}
for a Borel-measurable function $h\colon\R\to\C$.  


\begin{thm}\label{th:lin comb}
Let $X$ be a r.v.\ and let $f$ denote the c.f.\ of $X$. 
Also, let $J$ be a finite set. 
Suppose that 
\begin{enumerate}[$($i$)$]
	\item\label{h=} $h(x)=\ov{x^r}\,e^{iux}$ for some real $r$ and $u$ and all real $x$;
	\item\label{g=} $g(t)=\sum\limits_{j\in J} a_j\,\ov{t^{q_j}}\,e^{iv_jt}$, for some functions $J\ni j\mapsto a_j\in\C\setminus\{0\}$, $J\ni j\mapsto q_j\in[0,\infty)$, and $J\ni j\mapsto v_j\in\R$, and for all real $t$;  
	\item\label{p>} for each $j\in J$, one has $p>q_j-1$ if $v_j\ne0$, and $p>q_j$ if $v_j=0$;
	\item\label{O} $|g(t)|=O(|t|^q)$ for some real $q>0\vee p$ and over all $t$ in a neighborhood of $0$; 
	\item\label{E<infty} $\E|X|^{r+p} + \E|X|^{r+q_j}<\infty$ for all $j\in J$. 
\end{enumerate}
Then 
\begin{equation}\label{eq:lin comb}
	\frac1{i^r}\int_0^{\infty-} \sum_{j\in J} a_j\, (-it)^{q_j}\, f^{(r+q_j)}(u+v_j t)
	\frac{\d t}{t^{p+1}}
	=c^+_p(g)\,\E X_+^{r+p} e^{iuX}+e^{-i\pi r}\,c^-_p(g)\,\E X_-^{r+p} e^{iuX}. 
\end{equation}
The limit $\int_0^{\infty-}$ in the left-hand side of \eqref{eq:lin comb} can be replaced by the Lebesgue integral $\int_0^{\infty}$ in the case when the condition \eqref{p>} above is satisfied in the strong sense that 
$p>q_j$ for all $j\in J$. 
\end{thm}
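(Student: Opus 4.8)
The plan is to read \eqref{eq:lin comb} as the instance of \eqref{eq:X homo} obtained from the choices $Y=h(X)=\ov{X^r}\,e^{iuX}$ and the given trigonometric‑type $g$, and then to transcribe each side into the stated form. First I would verify the hypotheses of Theorem~\ref{th:homo}. Condition~\eqref{eq:cond1} reads $\E|Y|\,|X|^p=\E|X|^{r+p}<\infty$, which is contained in~\eqref{E<infty}; condition~\eqref{eq:cond2} follows by Tonelli from the crude bound $|g(tX)|\le\sum_{j}|a_j|\,t^{q_j}|X|^{q_j}$ together with $\E|X|^{r+q_j}<\infty$ from~\eqref{E<infty}. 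I would also record that $g\in\G_p$: it is continuous with $g(0)=0$ (since $q_j\ge0$ forces $0^{q_j}=0$ under the conventions~\eqref{eq:conventions}), it is absolutely integrable against $t^{-p-1}$ near $0$ by~\eqref{O} (because there $q>p$), and the limits $c^\pm_p(g)$ converge at infinity by~\eqref{p>}: for $v_j=0$ the summand is the pure power $t^{q_j-p-1}$, integrable at $\infty$ exactly when $p>q_j$, while for $v_j\ne0$ the oscillation $e^{iv_jt}$ makes $\int^\infty t^{q_j-p-1}e^{iv_jt}\,\d t$ converge (conditionally) precisely when $p>q_j-1$, via the Dirichlet test.

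With the hypotheses in hand, \eqref{eq:X homo} supplies
\[
\int_{0+}^{\infty-}\frac{\E Y g(tX)}{t^{p+1}}\,\d t=c^+_p(g)\,\E YX_+^p+c^-_p(g)\,\E YX_-^p,
\]
and the right‑hand side is evaluated by splitting on the sign of $X$ under the branch convention $x^r=|x|^r e^{i\pi r}$ for $x<0$ (so $\ov{x^r}=x^r$ for $x>0$ and $\ov{x^r}=|x|^r e^{-i\pi r}$ for $x<0$). This yields $\E YX_+^p=\E X_+^{r+p}e^{iuX}$ and $\E YX_-^p=e^{-i\pi r}\,\E X_-^{r+p}e^{iuX}$, i.e.\ exactly the right‑hand side of~\eqref{eq:lin comb}.

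The substantive step is to turn the left integrand into the form involving $f^{(r+q_j)}$. For $t>0$ one has $\ov{(tX)^{q_j}}=t^{q_j}\ov{X^{q_j}}$ and $\ov{X^r}\,\ov{X^{q_j}}=\ov{X^{r+q_j}}$, so
\[
\E Y g(tX)=\sum_{j\in J}a_j\,t^{q_j}\,\E\,\ov{X^{r+q_j}}\,e^{i(u+v_jt)X}.
\]
Here I would invoke the fractional‑derivative representation of the c.f.,
\[
f^{(\rho)}(w)=i^\rho\,\E\,\ov{X^\rho}\,e^{iwX}\qquad(\E|X|^\rho<\infty),
\]
which extends the elementary integer‑order identity $f^{(n)}(w)=i^n\E X^n e^{iwX}$; with $\rho=r+q_j$ (finite by~\eqref{E<infty}) and $w=u+v_jt$ it gives $\E\,\ov{X^{r+q_j}}e^{i(u+v_jt)X}=i^{-r}(-i)^{q_j}f^{(r+q_j)}(u+v_jt)$, using $i^{-q_j}=(-i)^{q_j}$ and $t^{q_j}(-i)^{q_j}=(-it)^{q_j}$, and this reproduces the integrand of~\eqref{eq:lin comb}. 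The lower limit may be written as $0$ rather than $0+$ because by~\eqref{O} the integrand $\E Y g(tX)/t^{p+1}$ is absolutely integrable near $0$ (this relies on the cancellation among the $a_j$‑terms encoded in~\eqref{O}, not on term‑by‑term integrability, since individual $q_j\le p$ are allowed when $v_j\ne0$); and if moreover $p>q_j$ for every $j$, then $|f^{(r+q_j)}|\le\E|X|^{r+q_j}$ makes the integrand $O(t^{q_j-p-1})$ with $q_j-p-1<-1$ at infinity, hence absolutely integrable, so $\int_0^{\infty-}$ becomes the genuine Lebesgue integral $\int_0^\infty$.

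I expect the main obstacle to be the fractional‑derivative identity $f^{(\rho)}(w)=i^\rho\E\ov{X^\rho}e^{iwX}$: everything else is bookkeeping with the branch of $x^r$ and convergence estimates governed by~\eqref{p>}, \eqref{O}, and~\eqref{E<infty}, but it is this identity that lets the positive‑ and negative‑part powers $X_\pm^{r+p}$ communicate with the derivatives $f^{(r+q_j)}$, and its correct application—keeping track of the conjugations and the $i^\rho$ branch—is where the care is needed.
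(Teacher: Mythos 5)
Your proposal is correct and follows essentially the same route as the paper: verify $g\in\G_p$ and conditions \eqref{eq:cond1}--\eqref{eq:cond2}, apply Theorem~\ref{th:homo} with $Y=h(X)=\ov{X^r}\,e^{iuX}$, and then transcribe both sides via the fractional-derivative identity \eqref{eq:f^{(q)}=} and the branch identity $\ov{x^r}=e^{-i\pi r}x_-^r$ for $x<0$ (the paper uses integration by parts where you use the Dirichlet test, an immaterial difference). In fact you make explicit two points the paper's proof leaves as ``easily rewritten'': the bookkeeping $t^{q_j}i^{-q_j}=(-it)^{q_j}$ in the integrand, and the role of the cancellation in condition \eqref{O} in justifying the lower limit $0$ (rather than $0+$) and the Lebesgue-integral claim when $p>q_j$ for all $j$.
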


In part \eqref{h=} of Theorem~\ref{th:lin comb} and in the sequel, the
horizontal bar over an expression denotes, as usual, its complex
conjugate. 
In \eqref{eq:lin comb}, we use the notation $f^{(p)}$ for the fractional derivative of an arbitrary order $p\in\R$ of the
function $f$. If $f$ is the c.f.\ of a r.v.\ $X$ with $\E|X|^p<\infty$, then 
\begin{equation}\label{eq:f^{(q)}=}
f^{(p)}(t) = i^p\,\E\ov{X^p} e^{itX} 	
\end{equation}
for all real $p$ and $t$. 
This identity was
presented in \cite{wolfe:1975a}. 
See Appendix~\ref
{sec:appd:B} for details. 
In this paper, we follow the usual convention that the (principal value of the) argument $\arg z$ of a nonzero complex number $z$ belongs to the interval $(-\pi,\pi]$, and then $z^p$ is understood as $|z|^p e^{ip\arg z}$. In \cite{wolfe:1975a}, the argument $\arg z$ of a nonzero complex number $z$ is assumed to belong to the interval $[0,2\pi)$. 
However, both definitions result in the same value of $X^p$ in \eqref{eq:f^{(q)}=} -- because, according to either definition, $\arg x=0$ if $x>0$ and $\arg x=\pi$ if $x<0$. 
The suggestion to consider negative values of $p$ as well and the reference to \cite{wolfe:1975a} is due to M.\ Matsui \cite{muneya}. 

Note that, by the condition \eqref{g=} of Theorem~\ref{th:lin comb}, $|g(t)|=O(|t|^{q_{\min}})$ over all real $t$, where $q_{\min}:=\min_{j\in J}q_j$. In the 
applications of Theorem~\ref{th:lin comb} that will be presented in this paper, $q_{\min}$ will usually be $0$ or $1$. However, because of mutual near-cancellation of the summands $a_j\,t^{q_j}\,e^{iv_jt}$ for $t$ near $0$, the value of $q$ in the condition \eqref{O} of Theorem~\ref{th:lin comb} may be much greater than $1$ in 
some of those applications. 
In turn, this allows the value of $p$ to be rather large and still satisfy the restriction $q>0\vee p$ in the condition \eqref{O}; in particular, this will be so for the function $g$ as in \eqref{eq:g_n}
, which will result in faster convergence of the corresponging integral near $\infty$. 

\begin{proof}[Proof of Theorem~\ref{th:lin comb}]
By the conditions \eqref{g=} and \eqref{O} in Theorem~\ref{th:lin comb}, $g$ is a locally integrable Borel-measurable function with $g(0)=0$. 
The condition \eqref{O} also implies that $\int_0^1 \big|\frac{g(\pm t)}{t^{p+1}}\big|\d t<\infty$. 
Using (say) integration by parts, one can easily see that there exist finite limits 
$\int_1^{\infty-} \frac{t^{q_j}\,e^{\pm iv_jt}}{t^{p+1}}\d t$ whenever $p>q_j-1$ and $v_j\ne0$. 
Also, $\int_1^{\infty} \big|\frac{t^{q_j}\,e^{\pm iv_jt}}{t^{p+1}}\big|\d t<\infty$ whenever $p>q_j$. 
So, in view of the conditions \eqref{g=} and \eqref{p>}, there exist finite limits $c^\pm_p(g)$, as defined in \eqref{eq:c^pm}. Thus, $g\in\G_p$. 

The function $h\colon\R\to\C$ as defined by the condition \eqref{h=} is clearly Borel-measurable. 
Also, 
by the condition \eqref{E<infty} in Theorem~\ref{th:lin comb}, 
the condition \eqref{eq:cond1} holds with $Y=h(X)$. 
The condition \eqref{eq:cond2} also holds, in view of the conditions \eqref{h=}, \eqref{g=}, and \eqref{E<infty}. 

Thus, by Theorem~\ref{th:homo}, one has \eqref{eq:X homo}. 
It remain
s to observe that, in view of \eqref{eq:f^{(q)}=} and the conditions \eqref{h=} and \eqref{g=}, the identity \eqref{eq:X homo} can be easily rewritten as \eqref{eq:lin comb}; at that, one uses the identity $\ov{x^r}=e^{-i\pi r}x_-^r$ for the case $x<0$. 
\end{proof}


As illustrated in the next section, 
Theorem~\ref{th:lin comb} is an omnibus result: 
specifying the parameters involved in the statement of Theorem~\ref{th:lin comb} -- $p$, $r$, $u$, $J$, $(a_j)$, $(q_j)$, and $(v_j)$ -- one can obtain known results and their extensions, as well as a large variety of substantially new results. At that, it may be desirable to have ``closed form'' expressions for the constants $c^\pm_p(g)$, which appear on the right-hand side of \eqref{eq:lin comb}.  


\section{Special cases of \texorpdfstring{Theorem~\ref{th:lin comb}}{th:lin comb}}

\subsection{Characteristic function of \texorpdfstring{$X_+$}{c.f. of pos part}} In Theorem~\ref{th:lin comb}, let $r=p=0$ and $g(t)=\sin t=\frac1{2i}(e^{it}-e^{-it})$ for real $t$. Let $f$ be the c.f.\ of $X$. Then Theorem~\ref{th:lin comb} yields 
\begin{equation*}
	\E e^{iuX}\sgn X=\frac1{\pi i}\int_0^{\infty-}[f(u+t)-f(u-t)]\,\frac{\d t}t. 
\end{equation*}
Using now the identity $2e^{iuX_+}=1+e^{iuX}+e^{iuX}\sgn X-\sgn X$ (presented in \cite[formula ~(8)]{c.f.-pos_publ}), one reproduces the following expression of the c.f.\ of $X_+$ in terms of the c.f.\ $f$ of $X$, obtained in \cite{c.f.-pos_publ}: 
\begin{equation*}
	\E e^{iuX_+}=\frac{1+f(u)}2+\frac1{2\pi i}\,\int_0^{\infty-}[f(u+t)-f(u-t)-f(t)+f(-t)]\,\frac{\d t}t
\end{equation*}
for real $u$. 

\subsection{Extensions of a result by Pinelis
}
Consider the identity \cite[Corollary~2]{pinelis:2011}
\begin{equation}\label{eq:0}
 \E X_+^p =
 \frac{\E X^k}2\ii\{p\in\N\}+
 \frac{\Ga(p+1)}\pi 
\,\int_0^\infty\Re
\frac{\E e_\ell(itX)}{(it)^{p+1}}\,\d t 
\end{equation} 
for any $p\in(0,\infty)$ and any r.v.\ $X$ with $\E|X|^p<\infty$, where 
\begin{equation}\label{eq:ell}
k:=k(p):=\fl p,\quad \ell:=\ell(p):=\ce{p-1},\quad\text{and}\quad 
e_m(z):=e^z-\sum
	_{j=0}^m\frac{z^j}{j!}.   
\end{equation}

Using this identity with the ``nonrandom'' r.v.'s $1$ and $-1$ in place of the r.v.\ $X$
, one obtains the constants 
\begin{equation}
\label{eq:c_pm,pin11}
	c^+_p(g)=1-\tfrac12\ii\{p\in\N\}\quad\text{and}\quad
	c^-_p(g)=\tfrac12\,(-1)^{k
	+1}\ii\{p\in\N\}
\end{equation}
for the function $g$ given by the formula 
{\definecolor{labelkey}{gray}{.7}
\definecolor{refkey}{gray}{.99}	
\begin{equation}\label{eq:g pin_pos}
g(t):=g_{\eqref{eq:g pin_pos}}(t):= 
	\frac{\Ga(p+1)}\pi
	\Re\frac{(R_\ell\exp)(0;it)}{i^{p+1}}
	=\frac{\Ga(p+1)}{2\pi}
	\Big(\frac{(R_\ell\exp)(0;it)}{i^{p+1}}+\frac{(R_\ell\exp)(0;-it)}{(-i)^{p+1}}\Big), 
\end{equation}
}
where 
\begin{equation}\label{eq:R_m}
	(R_m\psi)(u;\de):=\psi(u+\de)-\sum_{j=0}^m\psi^{(j)}(u)\,\frac{\de^j}{j!}
	=\frac{\de^{m+1}}{m!}\int_0^1(1-\al)^m f^{(m+1)}(u+\al\de) \d \al  
\end{equation}
is the $m$th-order Taylor remainder for a function $\psi$, 
so that 
\begin{equation}\label{eq:e_ell=R}
	(R_\ell\exp)(0;it)=e_\ell(it). 
\end{equation}

Note that the function $g=g_{\eqref{eq:g pin_pos}}$ satisfies the condition \eqref{O} of Theorem~\ref{th:lin comb} with $q=\ell+1$ for $p\in(0,\infty)\setminus\N$ and $q=\ell+2$ for $p\in\N$, and this function obviously satisfies the conditions \eqref{g=} and \eqref{p>} of Theorem~\ref{th:lin comb} as well, for appropriately defined parameters $J$, $(a_j)$, $(q_j)$, and $(v_j)$. 
Moreover, 
here all the $q_j$'s are nonnegative integers, with $\max_{j\in J}q_j=\ell<p$, 
so that the condition \eqref{p>} above is satisfied in the strong sense mentioned in the last sentence in the statement of Theorem~\ref{th:lin comb}.  
Thus, transcribing the terms of the form $\ov{t^{q_j}}\,e^{iv_jt}$ into the corresponding terms $(-it)^{q_j}\, f^{(r+q_j)}(u+v_j t)$ in the left-hand side of \eqref{eq:lin comb}, 
one immediately obtains the following extension of \eqref{eq:0}: 

\begin{corollary}\label{cor:pin_pos} 
Take any $p\in(0,\infty)$, $r\in\R$, and $u\in\R$. Take any r.v.\ $X$ such that $\E|X|^r+\E|X|^{r+p}<\infty$. 
Let $f$ be the c.f.\ of $X$. 
Then 
\begin{equation}\label{eq:pin_pos}
\E X_+^{r+p}e^{iuX}=\frac{f^{(p+r)}(u)}{2i^{p+r}} \ii\{p\in\N\}
+\frac{\Gamma (p+1)}{2 \pi i^r} \int_0^{\infty }
\Big(\frac{(R_\ell f^{(r)})(u;t)}{(it)^{p+1}}+\frac{(R_\ell f^{(r)})(u;-t)}{(-it)^{p+1}}\Big) \d t.   
\end{equation}
\end{corollary}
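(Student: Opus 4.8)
The plan is to read off this corollary as the special case of Theorem~\ref{th:lin comb} in which $h(x)=\ov{x^r}\,e^{iux}$ and $g=g_{\eqref{eq:g pin_pos}}$, and then to disentangle the two cases $p\notin\N$ and $p\in\N$ that are recorded in the constants \eqref{eq:c_pm,pin11}. Most of the hypothesis-checking has already been carried out in the preceding discussion: $g_{\eqref{eq:g pin_pos}}$ is exhibited there as being of the form demanded in part~\eqref{g=}, its orders $q_j$ are the integers $0,1,\dots,\ell$ together with the frequency-$\pm1$ term of order $0$, condition~\eqref{O} holds with $q=\ell+1$ (respectively $q=\ell+2$ when $p\in\N$, because of the extra near-cancellation), and $\max_j q_j=\ell<p$, so that \eqref{p>} holds in the strong sense and the integral may be written as an honest Lebesgue integral $\int_0^\infty$. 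First I would dispatch the one remaining hypothesis, condition~\eqref{E<infty}: since the relevant exponents $r+q_j$ all lie in $[r,r+p]$ (as $q_j\le\ell<p$), the pointwise domination $|X|^{r+q_j}\le|X|^r+|X|^{r+p}$ shows that $\E|X|^r+\E|X|^{r+p}<\infty$ already forces $\E|X|^{r+q_j}<\infty$ for every $j$, so the stated hypothesis suffices.

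The core step is to recognize the left-hand integral of \eqref{eq:lin comb} as the integral in \eqref{eq:pin_pos}. I would do this termwise rather than through the abstract transcription rule: writing $e_\ell(itX)=e^{itX}-\sum_{j=0}^\ell\frac{(it)^jX^j}{j!}$, applying \eqref{eq:f^{(q)}=} to each summand (legitimate since the needed moments are finite), and using the elementary identity $\ov{X^r}\,X^j=\ov{X^{r+j}}$ for nonnegative integers $j$ (valid under the present argument convention because $e^{ij\pi}=e^{-ij\pi}$), one finds
\begin{equation*}
\E\,\ov{X^r}\,e^{iuX}\,e_\ell(\pm itX)=\frac{1}{i^r}\,(R_\ell f^{(r)})(u;\pm t).
\end{equation*}
Feeding this into the definition \eqref{eq:g pin_pos} of $g$ and absorbing the factor $\frac1{t^{p+1}}$ from $\d t/t^{p+1}$ into the denominators via $i^{p+1}t^{p+1}=(it)^{p+1}$ and $(-i)^{p+1}t^{p+1}=(-it)^{p+1}$ (both for $t>0$) turns the left-hand side of \eqref{eq:X homo} into precisely $\frac{\Ga(p+1)}{2\pi i^r}\int_0^\infty\big(\frac{(R_\ell f^{(r)})(u;t)}{(it)^{p+1}}+\frac{(R_\ell f^{(r)})(u;-t)}{(-it)^{p+1}}\big)\,\d t$. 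In parallel, $\ov{X^r}X_+^p=X_+^{r+p}$ and $\ov{X^r}X_-^p=e^{-i\pi r}X_-^{r+p}$ (on $\{X>0\}$ and $\{X<0\}$ respectively) identify the right-hand side of \eqref{eq:X homo} with $c^+_p(g)\,\E X_+^{r+p}e^{iuX}+e^{-i\pi r}c^-_p(g)\,\E X_-^{r+p}e^{iuX}$.

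It then remains to substitute the constants \eqref{eq:c_pm,pin11} and solve for $\E X_+^{r+p}e^{iuX}$. When $p\notin\N$ one has $c^+_p(g)=1$ and $c^-_p(g)=0$, and the identity is already \eqref{eq:pin_pos} with the indicator term absent. When $p\in\N$ one has $c^+_p(g)=\tfrac12$ and $c^-_p(g)=\tfrac12(-1)^{p+1}$, so the identity only yields a linear combination of $\E X_+^{r+p}e^{iuX}$ and $\E X_-^{r+p}e^{iuX}$; I would eliminate the latter using one further relation, obtained from \eqref{eq:f^{(q)}=} at order $p+r$, namely $\frac{f^{(p+r)}(u)}{i^{p+r}}=\E X_+^{r+p}e^{iuX}+(-1)^pe^{-i\pi r}\,\E X_-^{r+p}e^{iuX}$. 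Adding this relation to twice the identity coming from Theorem~\ref{th:lin comb} cancels the $X_-$ contribution (since $(-1)^{p+1}=-(-1)^p$) and produces exactly the claimed extra summand $\frac{f^{(p+r)}(u)}{2i^{p+r}}$.

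I expect the only real obstacle to be complex-argument bookkeeping: keeping the branch convention $\arg z\in(-\pi,\pi]$ consistent across the identities $\ov{X^r}X^j=\ov{X^{r+j}}$, $\ov{X^r}X_+^p=X_+^{r+p}$, $\ov{X^r}X_-^p=e^{-i\pi r}X_-^{r+p}$, and $(\pm it)^{p+1}=(\pm i)^{p+1}t^{p+1}$, and then solving the $2\times2$ system in the integer case so that the factors $(-1)^{p+1}$ and $(-1)^p$ combine to cancel $\E X_-^{r+p}e^{iuX}$ rather than reinforce it. All of the analytic content (Fubini, dominated convergence, and the existence of the finite constants $c^\pm_p(g)$) is inherited from Theorems~\ref{th:homo} and~\ref{th:lin comb}, so no new estimates should be required.
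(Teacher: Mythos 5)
Your proof is correct and follows essentially the same route as the paper: both apply Theorem~\ref{th:lin comb} with $h(x)=\ov{x^r}\,e^{iux}$ and $g=g_{\eqref{eq:g pin_pos}}$, use the constants \eqref{eq:c_pm,pin11}, and (for $p\in\N$) remove the $\E X_-^{r+p}e^{iuX}$ term via \eqref{eq:f^{(q)}=} at order $p+r$. The paper compresses the termwise transcription and the integer-case elimination into the phrase ``one immediately obtains,'' whereas you spell them out explicitly; the mathematical content is identical.
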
 

From the reasoning preceding Corollary~\ref{cor:pin_pos}, it is clear that the integral in \eqref{eq:pin_pos} exists in the Lebesgue sense, which is (in general) in contrast with the integrals in such identities as \eqref{eq:X homo}. 

Under the condition of Corollary~\ref{cor:pin_pos}, one can rewrite \eqref{eq:pin_pos} in the ostensibly simpler form 
\begin{equation}\label{eq:pin_pos,2}
\E X_+^{r+p}e^{iuX}=\frac{f^{(k+r)}(u)}{2i^{k+r}} \ii\{p\in
\N\}
+\frac{\Gamma (p+1)}{2 \pi i^r} \fint_{-\infty}^\infty
\frac{(R_\ell f^{(r)})(u;t)}{(it)^{p+1}} \d t,  
\end{equation}  
where 
$\fint_{-\infty}^\infty:=\lim_{\vep\downarrow0}\big(\int_{-\infty}^{-\vep}+\int_\vep^\infty\big)$. 	
Actually, this integral exists in the Lebesgue sense if $p\in(0,\infty)\setminus\N$. However, even then the calculation of each of the ``pre-limit'' integrals $\int_{-\infty}^{-\vep}$ and $\int_\vep^\infty$ will be less stable than the calculation of the corresponding ``pre-limit'' integral for the integral in \eqref{eq:pin_pos}; of course, the latter ``pre-limit'' integral equals the sum of ``pre-limit'' integrals $\int_{-\infty}^{-\vep}$ and $\int_\vep^\infty$ for the integral in \eqref{eq:pin_pos,2}.

Taking $r=0$ and $u=0$ in \eqref{eq:pin_pos}, one gets back \eqref{eq:0} as a special case. 

Note that, for any given real value of the exponent $r+p$ in the left-hand side of \eqref{eq:pin_pos}, one has a great deal of flexibility in choosing $p\in(0,\infty)$ and $r\in\R$. 
Note also that 
the exponent $r+p$ may be any negative real number, if one chooses $r$ to be less than $-p$.


Another special case of Corollary~\ref{cor:pin_pos} is obtained by taking there any $p\in(0,1]$ and any $r\in\{0,1,\dots\}$, as is done in the proof of the following corollary.  

\begin{corollary}\label{cor:c.f.X_+^p}
Take any $p\in(0,\infty)$ and $u\in\R$. Take any r.v.\ $X$ such that $\E|X|^p<\infty$. 
Let $f$ be the c.f.\ of $X$. 
Then, with $\la:=\la(p):=p-\ell(p)=p-\ell$ \big(and 
$\ell$ as in \eqref{eq:ell}\big),  
\begin{equation}\label{eq:c.f.X_+^p}
	 \E X^p_+ e^{i u X} 
 = \frac{f^{(p)}(u)}{2i^p} \ii{\{p\in \N\}} +
 \frac{\Gamma(\la+1)}{2\pi i^\ell}
 \int_0^\infty \Big(\frac{f^{(\ell)}(u+t)-f^{(\ell)}(u)}{(it)^{\la+1}}
 +\frac{f^{(\ell)}(u-t)-f^{(\ell)}(u)}{(-it)^{\la+1}}
 \Big)\d t. 
\end{equation}
\end{corollary}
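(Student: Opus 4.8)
The plan is to obtain \eqref{eq:c.f.X_+^p} as a direct specialization of Corollary~\ref{cor:pin_pos}, choosing its two free exponent parameters so that the exponent ``$r+p$'' there equals the target exponent $p$ of the present statement. Throughout, let $p$, $\ell=\ell(p)=\ce{p-1}$, and $\la=p-\ell$ be the quantities as defined in the present corollary. I would apply Corollary~\ref{cor:pin_pos} with its parameter ``$p$'' set equal to $\la$ and its parameter ``$r$'' set equal to $\ell$ (keeping the same $u$). Since then the sum of those parameters is $\ell+\la=p$, the left-hand side of \eqref{eq:pin_pos} becomes exactly $\E X_+^p e^{iuX}$, as required.

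Before invoking the identity I would verify admissibility of this choice. Because $p>0$, the integer $\ell=\ce{p-1}$ lies in $\{0,1,2,\dots\}$; and from $p-1\le\ce{p-1}<p$ one gets $\la=p-\ell\in(0,1]$, so the requirements $p\in(0,1]$ and $r\in\{0,1,\dots\}$ flagged in the sentence preceding the statement are met. The moment condition $\E|X|^r+\E|X|^{r+p}<\infty$ of Corollary~\ref{cor:pin_pos} here reads $\E|X|^\ell+\E|X|^p<\infty$; since $\ell<p$ and $\E|X|^p<\infty$ by hypothesis, the lower-order moment $\E|X|^\ell$ is automatically finite, so this condition holds.

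The one genuine computation is to check that the Taylor remainder appearing inside Corollary~\ref{cor:pin_pos} collapses to a plain first difference. With its parameter ``$p$'' equal to $\la$, the internal remainder order there is $\ell(\la)=\ce{\la-1}$; but $\la\in(0,1]$ gives $\la-1\in(-1,0]$, hence $\ce{\la-1}=0$. Thus this remainder is $R_0$, and by \eqref{eq:R_m} one has $(R_0 f^{(\ell)})(u;\pm t)=f^{(\ell)}(u\pm t)-f^{(\ell)}(u)$. Substituting this, together with the prefactor $\Ga(\la+1)$, the factor $i^{r}=i^{\ell}$, and the powers $(\pm it)^{\la+1}$ dictated by the parameter assignment, turns the integral in \eqref{eq:pin_pos} into precisely the integral in \eqref{eq:c.f.X_+^p}. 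For the boundary term, the relevant indicator is $\ii\{\la\in\N\}$ (the internal parameter being $\la$); since $\la\in(0,1]$, this equals $\ii\{\la=1\}=\ii\{p\in\N\}$, and the accompanying factor is $f^{(\ell+\la)}(u)/(2i^{\ell+\la})=f^{(p)}(u)/(2i^p)$, matching the first term on the right-hand side of \eqref{eq:c.f.X_+^p}.

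I expect no analytic obstacle: the Lebesgue existence of the integral is already guaranteed by Corollary~\ref{cor:pin_pos}, so there is nothing further to estimate. The only real subtlety—and where care is needed—is the notational collision between the two distinct roles of $p$ and $\ell$: the target exponent $p$ with its associated $\ell=\ce{p-1}$ on one side, versus the values fed into Corollary~\ref{cor:pin_pos}, where ``$p$'' $=\la$ forces the \emph{internal} ``$\ell$'' to vanish, on the other. Keeping these straight, and in particular confirming that the internal remainder order is $0$, is the crux of the argument.
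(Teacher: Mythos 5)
Your proof is correct and is essentially the paper's own argument: the paper likewise proves \eqref{eq:c.f.X_+^p} by specializing Corollary~\ref{cor:pin_pos} with its exponent parameters set to $\ga=\la\in(0,1]$ and $r=\ell\in\{0,1,\dots\}$, using that the internal remainder order $\ell(\ga)=0$ collapses $R_\ell$ to the first difference $f^{(r)}(u\pm t)-f^{(r)}(u)$. The only difference is one of presentation—the paper first writes the identity for arbitrary $(r,\ga)$ and then notes that every $p>0$ decomposes as $r+\ga$ with $\ell(p)=r$, $\la(p)=\ga$, whereas you start from the given $p$ and verify admissibility of the choice $(r,\ga)=(\ell,\la)$ (including the automatic finiteness of $\E|X|^\ell$), which is the same proof read in the opposite direction.
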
 

\begin{proof} 
In Corollary~\ref{cor:pin_pos}, 
replace $p$ by an arbitrary $\ga\in(0,1]$ and also take any $r\in\{0,1,\dots\}$. 
By \eqref{eq:ell}, $\ell(\ga)=0$. So, \eqref{eq:pin_pos} yields 
\begin{multline}\label{eq:c.f.X_+^p,alt}
	 \E X^{r+\ga}_+ e^{i u X} 
 = \frac{f^{(r+\ga)}(u)}{2i^{r+\ga}} \ii{\{\ga+r\in \mathbb N\}} 
 \\
 +
 \frac{\Gamma(\ga+1)}{2\pi i^r}
 \int_0^\infty \Big(\frac{f^{(r)}(u+t)-f^{(r)}(u)}{(it)^{\ga+1}} 
 +\frac{f^{(r)}(u-t)-f^{(r)}(u)}{(-it)^{\ga+1}}
 \Big)\d t. 
\end{multline}
It remains to replace, in \eqref{eq:c.f.X_+^p,alt}, $r+\ga$ by $p$ and notice that 
$\ell(r+\ga)=r$, $\la(r+\ga)=\ga$, and $\big\{r+\ga\colon r\in\{0,1,\dots\},\;\ga\in(0,1]\big\}=(0,\infty)$. 
\end{proof}

\renewcommand{\bar}{\overline}

Versions of \eqref{eq:pin_pos} and \eqref{eq:c.f.X_+^p} with $u=0$ were obtained in \cite{muneya}. 

For $u=0$, formula \eqref{eq:c.f.X_+^p} can be simplified, using the identities $f^{(\ell)}(-t)=(-1)^\ell\bar{f}^{(\ell)}(t)$, $X_-^p=(-X)_+^p$, $|X|^p=X_+^p+X_-^p$, and $X^{[p]}=X_+^p-X_-^p$.  
So, one immediately obtains 
\begin{corollary}\label{cor:c.f.X_+^p,u=0}
In the conditions of Corollary~\ref{cor:c.f.X_+^p},  
\begin{align}
	 \E X^p_+ &= \frac{\E X^p}{2} \ii{\{p\in \N\}} +
 \frac{\Ga(\la+1)}{\pi} \int_0^\infty \Re\frac{f^{(\ell)}(t)-f^{(\ell)}(0)}{i^{p+1}}\,
 \frac{\d t}{t^{\la+1}}, \label{eq:c.f.X_+^p,u=0} \\ 
	 \E X^p_- &= \frac{(-1)^p\,\E X^p}{2} \ii{\{p\in \N\}} +
 \frac{\Ga(\la+1)}{\pi} \int_0^\infty \Re\frac{\bar{f}^{(\ell)}(t)-\bar{f}^{(\ell)}(0)}{i^{p+1}}\,
 \frac{\d t}{t^{\la+1}}, \label{eq:c.f.X_-^p,u=0} \\ 
 	 \E|X|^p &= \E X^p \ii{\Big\{\frac p2\in \N\Big\}} +
 2\frac{\Ga(\la+1)}{\pi}\,\Re(i^{p+1}) \int_0^\infty \frac{\Re\big(f^{(\ell)}(t)-f^{(\ell)}(0)\big)}{t^{\la+1}}\d t, \label{eq:c.f.|X|^p,u=0} \\ 
 	 \E X^{[p]} &= \E X^p \ii{\Big\{\frac{p+1}2\in \N\Big\}} +
 2\frac{\Ga(\la+1)}{\pi}\,\Re(i^p) \int_0^\infty \frac{\Im\big(f^{(\ell)}(t)-f^{(\ell)}(0)\big)}{t^{\la+1}}\d t. \label{eq:c.f.X^[p],u=0}  
\end{align}
\end{corollary}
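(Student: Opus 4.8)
The plan is to derive all four identities from Corollary~\ref{cor:c.f.X_+^p} by setting $u=0$ and then exploiting the reflection symmetry of the c.f. First I would specialize \eqref{eq:c.f.X_+^p} to $u=0$, so that its integrand becomes
\begin{equation*}
\frac{f^{(\ell)}(t)-f^{(\ell)}(0)}{(it)^{\la+1}}+\frac{f^{(\ell)}(-t)-f^{(\ell)}(0)}{(-it)^{\la+1}}
\end{equation*}
and the leading term is $\frac{f^{(p)}(0)}{2i^p}\ii\{p\in\N\}$. Since $X$ is real and $p\in\N$ whenever this indicator is nonzero, \eqref{eq:f^{(q)}=} gives $f^{(p)}(0)=i^p\,\E X^p$, so $\frac{f^{(p)}(0)}{2i^p}=\frac{\E X^p}2$, which already produces the leading term of \eqref{eq:c.f.X_+^p,u=0}.

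The key algebraic step is to collapse the two summands in the integrand into a single real part. Using $f(-t)=\bar f(t)$ (valid because $X$ is real) one has $f^{(\ell)}(-t)=(-1)^\ell\bar f^{(\ell)}(t)$ and likewise $f^{(\ell)}(0)=(-1)^\ell\bar f^{(\ell)}(0)$; moreover $(-it)^{\la+1}=\ov{(it)^{\la+1}}$ for $t>0$. Hence the second summand equals $(-1)^\ell$ times the complex conjugate of the first. Writing the prefactor $\frac{\Ga(\la+1)}{2\pi i^\ell}$ and using $\frac{(-1)^\ell}{i^\ell}=i^\ell=\ov{(-i)^\ell}$ together with $\ell+\la+1=p+1$, I would verify that
\begin{equation*}
\frac1{i^\ell}\Big(A+(-1)^\ell\bar A\Big)=2\,\Re\big((-i)^\ell A\big)=\frac2{t^{\la+1}}\,\Re\frac{f^{(\ell)}(t)-f^{(\ell)}(0)}{i^{p+1}},
\end{equation*}
where $A:=\big(f^{(\ell)}(t)-f^{(\ell)}(0)\big)/(it)^{\la+1}$. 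This yields \eqref{eq:c.f.X_+^p,u=0}, the factor $2$ combining with $\frac{\Ga(\la+1)}{2\pi}$ to give $\frac{\Ga(\la+1)}\pi$.

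Next I would obtain \eqref{eq:c.f.X_-^p,u=0} by applying \eqref{eq:c.f.X_+^p,u=0} to the r.v.\ $-X$, whose c.f.\ is $\bar f$; since $\ell$ and $\la$ depend only on $p$ and $\E(-X)^p=(-1)^p\E X^p$, replacing $f$ by $\bar f$ gives the stated formula for $\E X_-^p=\E(-X)_+^p$. Finally, \eqref{eq:c.f.|X|^p,u=0} and \eqref{eq:c.f.X^[p],u=0} follow by adding and subtracting \eqref{eq:c.f.X_+^p,u=0} and \eqref{eq:c.f.X_-^p,u=0}, using $|X|^p=X_+^p+X_-^p$ (valid for $p>0$) and $X^{[p]}=X_+^p-X_-^p$. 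With $B:=f^{(\ell)}(t)-f^{(\ell)}(0)$ and $c:=1/i^{p+1}=\ov{i^{p+1}}$, I would use $\Re(cB)+\Re(c\bar B)=2\Re(c)\Re B=2\Re(i^{p+1})\Re B$ and $\Re(cB)-\Re(c\bar B)=-2\Im(c)\Im B=2\Im(i^{p+1})\Im B=2\Re(i^p)\Im B$, the last equality because $\Im(i^{p+1})=\cos\frac{\pi p}2=\Re(i^p)$. The indicator terms combine via $\tfrac12\big(1\pm(-1)^p\big)\ii\{p\in\N\}$, equal to $\ii\{p/2\in\N\}$ in the $|X|^p$ case and $\ii\{(p+1)/2\in\N\}$ in the $X^{[p]}$ case. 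The main obstacle is purely the bookkeeping of the complex phase factors in the second step: one must track $i^\ell$, $(it)^{\la+1}$, $(-it)^{\la+1}$, and the identity $\ell+\la=p$ carefully to confirm that the two conjugate summands really assemble into $2\,\Re\frac{\,\cdot\,}{i^{p+1}}$ with the correct normalization and no spurious sign; everything else is a routine consequence of linearity and $f(-t)=\bar f(t)$.
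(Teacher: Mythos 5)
Your proposal is correct and follows essentially the same route as the paper, which derives the corollary by setting $u=0$ in \eqref{eq:c.f.X_+^p} and invoking exactly the identities you use: $f^{(\ell)}(-t)=(-1)^\ell\bar f^{(\ell)}(t)$, $X_-^p=(-X)_+^p$, $|X|^p=X_+^p+X_-^p$, and $X^{[p]}=X_+^p-X_-^p$. The paper states this in one sentence without details; your careful bookkeeping of the phase factors (in particular $\frac1{i^\ell}(A+(-1)^\ell\bar A)=2\Re((-i)^\ell A)$ with $\ell+\la+1=p+1$, and $\Im(i^{p+1})=\Re(i^p)$) is precisely the verification the paper leaves to the reader.
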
 

In the ``fractional'' case, when $p$ is a positive non-integer, one can rewrite the expression of $\E X^p_+ e^{i u X}$ in \eqref{eq:c.f.X_+^p} in terms of fractional derivatives of $f$: 

\begin{corollary}\label{cor:cor c.f.X_+^p}
Assume the conditions of Corollary~\ref{cor:c.f.X_+^p}, with any $p\in(0,\infty)\setminus\N$.  
Then for any $u\in\R$
\begin{align} 
\E X_+^p e^{i u X}&= \frac{1}{2i^{\ell+1} \sin
 \pi\la} \Big\{i^{\la}f^{(p)}(u)-(-1)^\ell\,
 i^{-\la}\bar{f}^{(p)}(-u)\Big\}, \label{eq:posi-p,u} \\ 
\E X_-^p e^{i u X} &=
\frac{1}{2i^{\ell+1} \sin \pi \la} \Big\{ i^{\la}\bar{f}^{(p)}(-u) -(-1)^\ell\,
 i^{-\la} f^{(p)}(u) \Big\}, \label{eq:nega-p,u} \\ 
\E|X|^p e^{i u X} &=
\frac{i^\la-(-1)^\ell\, i^{-\la}}{2i^{\ell+1} \sin \pi \la} 
\big\{f^{(p)}(u)+\bar{f}^{(p)}(-u)\big\}, \label{eq:abs-p,u} \\ 
\E X^{[p]} e^{i u X} &=
\frac{i^\la+(-1)^\ell\, i^{-\la}}{2i^{\ell+1} \sin \pi \la} 
\big\{f^{(p)}(u)-\bar{f}^{(p)}(-u)\big\}. \label{eq:sign-p,u} 
\end{align}
\end{corollary}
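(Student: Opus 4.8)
The plan is to bypass the integral in \eqref{eq:c.f.X_+^p} entirely and work directly from the moment--fractional-derivative identity \eqref{eq:f^{(q)}=}. The point is that all four left-hand sides in \eqref{eq:posi-p,u}--\eqref{eq:sign-p,u} are linear combinations of the two generalized moments $A:=\E X_+^p e^{iuX}$ and $B:=\E X_-^p e^{iuX}$, and these two moments are in turn linearly related to $f^{(p)}(u)$ and $\bar f^{(p)}(-u)$. First I would record the splitting $x^p=x_+^p+e^{i\pi p}x_-^p$, valid for every real $x$ under the paper's conventions (both sides vanish at $x=0$ since $p>0$), whence $\ov{x^p}=x_+^p+e^{-i\pi p}x_-^p$. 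Since $\E|X|^p<\infty$ by the hypotheses inherited from Corollary~\ref{cor:c.f.X_+^p}, identity \eqref{eq:f^{(q)}=} applies at order $p$.

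Next I would produce the two governing equations. Applying \eqref{eq:f^{(q)}=} at $u$ and using $i^p e^{-i\pi p}=i^{-p}$ gives $f^{(p)}(u)=i^p A+i^{-p}B$; conjugating \eqref{eq:f^{(q)}=} at $-u$ and using $i^{-p}e^{i\pi p}=i^p$ gives $\bar f^{(p)}(-u)=i^{-p}A+i^p B$. This is a $2\times2$ linear system in $A,B$ with determinant $i^{2p}-i^{-2p}=2i\sin(\pi p)$. Writing $p=\ell+\la$ with $\ell\in\Z$ (as in \eqref{eq:ell}) rewrites the determinant as $2i(-1)^\ell\sin(\pi\la)$, using $\sin(\pi p)=(-1)^\ell\sin(\pi\la)$; this is nonzero precisely because $p\notin\N$ forces $\la\in(0,1)$, which is exactly where the restriction to non-integer $p$ enters.

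Then I would solve by Cramer's rule and clean up the powers of $i$: factoring $i^p=i^\ell i^\la$, $i^{-p}=i^{-\ell}i^{-\la}$, and $(-1)^\ell=i^{2\ell}$ out of numerator and denominator collapses the solution for $A$ into \eqref{eq:posi-p,u} and the solution for $B$ into \eqref{eq:nega-p,u}. Finally, \eqref{eq:abs-p,u} and \eqref{eq:sign-p,u} follow at once from $\E|X|^p e^{iuX}=A+B$ and $\E X^{[p]}e^{iuX}=A-B$, since the two numerators then combine as $\big(i^\la\mp(-1)^\ell i^{-\la}\big)\big(f^{(p)}(u)\pm\bar f^{(p)}(-u)\big)$.

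There is no deep obstacle here: the content is entirely \eqref{eq:f^{(q)}=} together with the solution of a $2\times2$ system, and going this route is cleaner than trying to evaluate the integral in \eqref{eq:c.f.X_+^p} in closed form. The one step demanding care — and the one most prone to sign and phase errors — is the bookkeeping of the principal-value powers of $i$, in particular the repeated use of $i^p e^{\mp i\pi p}=i^{\mp p}$ and the identity $\sin(\pi p)=(-1)^\ell\sin(\pi\la)$ that puts the determinant in the exact form appearing in the claimed formulas. I would double-check these phase computations against the convention $z^p=|z|^p e^{ip\arg z}$ with $\arg z\in(-\pi,\pi]$ fixed in the paper, and confirm that the event $\{X=0\}$ contributes nothing, which holds since $0^p=0$ for $p>0$.
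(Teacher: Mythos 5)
Your proof is correct, but it takes a genuinely different route from the paper's. The paper proves \eqref{eq:posi-p,u} by starting from the integral representation \eqref{eq:c.f.X_+^p} (whose atom term vanishes since $p\notin\N$), flipping one of the two integrals via $f^{(\ell)}(u+t)=(-1)^\ell\bar{f}^{(\ell)}(-u-t)$, recognizing each resulting integral as a Marchaud fractional derivative by the definition \eqref{eq:D^p} (which brings in a factor $\Ga(-\la)$), and simplifying with the reflection formula $\Ga(\la+1)\Ga(-\la)=-\pi/\sin\pi\la$; then \eqref{eq:nega-p,u} follows by the substitution $X\mapsto-X$, $u\mapsto-u$, and the last two identities by adding and subtracting. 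You instead bypass \eqref{eq:c.f.X_+^p} altogether: Wolfe's identity \eqref{eq:f^{(q)}=} at $u$ and at $-u$, combined with the principal-branch splitting $\ov{x^p}=x_+^p+e^{-i\pi p}x_-^p$, yields the linear system $f^{(p)}(u)=i^pA+i^{-p}B$, $\bar{f}^{(p)}(-u)=i^{-p}A+i^pB$ in $A=\E X_+^pe^{iuX}$ and $B=\E X_-^pe^{iuX}$, with determinant $i^{2p}-i^{-2p}=2i(-1)^\ell\sin\pi\la\ne0$ exactly because $p\notin\N$, and Cramer's rule plus the phase identities $i^p=i^\ell i^\la$, $i^{-2\ell}=(-1)^\ell$ gives all four formulas. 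I checked your phase bookkeeping and it is right; there is also no circularity, since \eqref{eq:f^{(q)}=} is established in Appendix~\ref{sec:appd:B} independently of this corollary (via Lemma~\ref{lem:int e_ell}, which rests on \eqref{eq:0}). As for what each approach buys: the paper's derivation keeps the corollary inside its unified scheme -- everything descends from Theorem~\ref{th:homo} through \eqref{eq:pin_pos} and \eqref{eq:c.f.X_+^p} -- and needs only the definition \eqref{eq:D^p} together with the Gamma reflection formula; your derivation is shorter and purely algebraic, and it makes the role of the restriction $p\notin\N$ completely transparent (invertibility of the $2\times2$ system), but it leans on the full strength of \eqref{eq:f^{(q)}=}, whose proof in Appendix~\ref{sec:appd:B} carries the analytic burden that your argument avoids.
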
 

\begin{proof} By \eqref{eq:c.f.X_+^p}, the condition $p\in(0,\infty)\setminus\N$, and \eqref{eq:D^p}, 
\begin{align*}
	 \E X^p_+ e^{i u X} 
& = 
 \frac{\Ga(\la+1)}{2\pi i^\ell}
 \int_0^\infty \Big(\frac{f^{(\ell)}(u+t)-f^{(\ell)}(u)}{(it)^{\la+1}}
 +\frac{f^{(\ell)}(u-t)-f^{(\ell)}(u)}{(-it)^{\la+1}}
 \Big)\d t \\
 &= \frac{\Ga(\la+1)}{2\pi i^\ell} \Big\{
 (-1)^{\ell} \int_0^\infty \frac{\bar f^{(\ell)}(-u-t)-\bar f^{(\ell)}(-u)}
 {(it)^{\la+1}}\d t 
 + \int_0^\infty
 \frac{f^{(\ell)}(u-t)-f^{(\ell)}(u)}{(-it)^{\la+1}}\d t  
\Big\} \\
 &= \frac{\Ga(\la+1)\Ga(-\la)}{2\pi i^\ell}\big\{
 (-1)^{\ell+1}\, i^{1-\la}\bar{f}^{(p)}(-u)+i^{1+\la}f^{(p)}(u)
 \big\} \\ 
 & = \frac{1}{2i^\ell \sin \pi\la}\big\{
 i^{\la-1}f^{(p)}(u)-(-1)^\ell\, i^{-\la-1}\bar{f}^{(p)}(-u)
 \big\},
\end{align*}
so that \eqref{eq:posi-p,u} is established. 
Now \eqref{eq:nega-p,u} follows by the identity $\E X^p_- e^{i u X}=\E(-X)^p_+ e^{i(-u)(-X)}$ for all $u\in\R$. 
Finally, \eqref{eq:abs-p,u} and \eqref{eq:sign-p,u} follow immediately from \eqref{eq:posi-p,u} and \eqref{eq:nega-p,u}. 
\end{proof}

The expressions in Corollary~\ref{cor:cor c.f.X_+^p} can be simplified in the case $u=0$. Alternatively, one can obtain the same result by taking the special case of $p\notin\N$ in Corollary~\ref{cor:c.f.X_+^p,u=0}: 
\begin{corollary}\label{th:short}
Let $ p \in (0,\infty)
\setminus \mathbb{N}$ and 
assume that $\E|X|^p<\infty$. Then 
\begin{align}
	\E X_+^p&=\frac{(-1)^{\ell+1}}{\sin\pi\la}\,\Re \big(i^{p+1}f^{(p)}(0)\big), \label{eq:posi-p} \\ 
	\E X_-^p&=\frac{(-1)^{\ell+1}}{\sin\pi\la}\,\Re \big(i^{p+1}\bar{f}^{(p)}(0)\big), \label{eq:nega-p} \\ 
	\E|X|^p&=2\frac{(-1)^{\ell+1}}{\sin\pi\la}\,\Re(i^{p+1})\,\Re f^{(p)}(0),  \label{eq:abs-p} \\
	\E X^{[p]}&=2\frac{(-1)^{\ell}}{\sin\pi\la}\,\Re(i^{p})\,\Im f^{(p)}(0).  \label{eq:sign-p}
\end{align}
\end{corollary}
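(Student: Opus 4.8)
The plan is to specialize the four fractional-derivative identities of Corollary~\ref{cor:cor c.f.X_+^p} to $u=0$ and to recognize the resulting $\C$-linear combinations of $f^{(p)}(0)$ and $\bar f^{(p)}(0)$ as the real-part expressions asserted in \eqref{eq:posi-p}--\eqref{eq:sign-p}. Writing $z:=f^{(p)}(0)$, I would first record the identity $\bar f^{(p)}(0)=\overline{z}$, which is needed throughout. This follows from \eqref{eq:f^{(q)}=}: since $\bar f$ is the c.f.\ of $-X$, one has $\bar f^{(p)}(0)=i^p\,\E\,\overline{(-X)^p}$, whereas $\overline{f^{(p)}(0)}=i^{-p}\,\E X^p$; expanding $X^p=X_+^p+e^{i\pi p}X_-^p$ and $(-X)^p=X_-^p+e^{i\pi p}X_+^p$ under the paper's $\arg$ convention, and using $i^p e^{-i\pi p}=i^{-p}$, shows both sides equal $i^{-p}\,\E X_+^p+i^p\,\E X_-^p$.

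Setting $u=0$ in \eqref{eq:posi-p,u}, so that $\bar f^{(p)}(-u)=\overline z$ there, gives
\[
\E X_+^p=\frac{1}{2i^{\ell+1}\sin\pi\la}\bigl\{i^{\la}z-(-1)^\ell i^{-\la}\overline z\bigr\}.
\]
The task is then the purely algebraic check that $\frac{1}{i^{\ell+1}}\bigl(i^{\la}z-(-1)^\ell i^{-\la}\overline z\bigr)=2(-1)^{\ell+1}\Re\bigl(i^{p+1}z\bigr)$. Using $\la=p-\ell$ together with $(-1)^{\ell+1}=i^{2\ell+2}$ and tracking exponents of $i$ modulo $4$, the coefficient of $z$ reduces to $(-1)^{\ell+1}i^{p+1}$ and that of $\overline z$ to $(-1)^{\ell+1}i^{-p-1}=\overline{(-1)^{\ell+1}i^{p+1}}$, so the combination collapses to $(-1)^{\ell+1}\bigl(i^{p+1}z+\overline{i^{p+1}z}\bigr)=2(-1)^{\ell+1}\Re(i^{p+1}z)$. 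Dividing by $\sin\pi\la$ yields \eqref{eq:posi-p}, and the outcome is automatically real, as it must be. Running the identical computation on \eqref{eq:nega-p,u} at $u=0$, with the roles of $z$ and $\overline z$ interchanged, gives \eqref{eq:nega-p}.

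The remaining two identities follow by taking the sum and difference, using $\E|X|^p=\E X_+^p+\E X_-^p$ (valid here since $p>0$, cf.\ \eqref{eq:|x|^p}) and $\E X^{[p]}=\E X_+^p-\E X_-^p$. With $\bar f^{(p)}(0)=\overline z$, the sum of the two real parts equals $\Re\bigl(i^{p+1}(z+\overline z)\bigr)=2\Re(i^{p+1})\Re z$, giving \eqref{eq:abs-p}; the difference equals $\Re\bigl(i^{p+1}(z-\overline z)\bigr)=-2\Re(i^{p})\Im z$ (using $z-\overline z=2i\Im z$ and $\Re(i^{p+2})=-\Re(i^p)$), so that multiplying by the prefactor $(-1)^{\ell+1}/\sin\pi\la$ turns the sign into $(-1)^{\ell}$ and produces \eqref{eq:sign-p}.

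I expect the only real obstacle to be bookkeeping: verifying $\bar f^{(p)}(0)=\overline{f^{(p)}(0)}$ cleanly, and keeping the powers of $i$ and the parity factor $(-1)^\ell$ consistent when converting the $i^{\pm\la}$-combinations into $\Re(i^{p+1}\,\cdot\,)$ form. As the author notes, an alternative route starts from \eqref{eq:c.f.X_+^p,u=0} (where $\ii\{p\in\N\}=0$ since $p\notin\N$) and evaluates the integral directly via the fractional-derivative relation used in the proof of Corollary~\ref{cor:cor c.f.X_+^p}; but since that essentially re-derives Corollary~\ref{cor:cor c.f.X_+^p}, specializing the latter at $u=0$ is the shorter path.
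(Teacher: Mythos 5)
Your proposal is correct and follows the paper's own route: the paper obtains this corollary precisely by specializing Corollary~\ref{cor:cor c.f.X_+^p} to $u=0$ (noting the alternative via Corollary~\ref{cor:c.f.X_+^p,u=0}, which you also mention), and your algebra — including the identity $\bar f^{(p)}(0)=\overline{f^{(p)}(0)}$ and the reduction of the $i^{\pm\la}$-combinations to $2(-1)^{\ell+1}\Re\big(i^{p+1}f^{(p)}(0)\big)$ — checks out. You have simply made explicit the bookkeeping that the paper leaves to the reader.
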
 

Concerning the factors $\Re(i^{p+1})$ and $\Re(i^p)$ in \eqref{eq:c.f.|X|^p,u=0}, \eqref{eq:c.f.X^[p],u=0}, \eqref{eq:abs-p}, and \eqref{eq:sign-p}, note that they can also be written as $-\sin\frac{\pi p}2$ and $\cos\frac{\pi p}2$, respectively.  


\subsection{On results by Brown and Laue} 


Formula \eqref{eq:0} \big(which is a special case of \eqref{eq:pin_pos}\big) was obtained, in somewhat different form, by Brown~\cite{brown:1970}, under the additional restriction that $p$ 
is not an odd integer. Also, as mentioned in \cite{pinelis:2011}, the constant-sign
argument used in the proof in \cite{brown:1970} does not actually seem to work for $p\in(0,2)$.
The mentioned representation was used in \cite{brown:1970} to prove, under a Lindeberg-type condition, the   
convergence of the absolute moments in the central limit theorem. 

Brown~\cite{brown:1972} 
also obtained formula \eqref{eq:c.f.|X|^p,u=0}, except that, in place of the Lebesgue integral $\int_0^\infty$ in \eqref{eq:c.f.|X|^p,u=0}, \cite{brown:1972} has the limit $\int_{0+}^{\infty-}$. 

Laue \cite[Theorem~2.2]{laue:1980} showed that, under the conditions of Corollary~\ref{th:short}, 
%
\begin{align}\label{exp:thm:2.2:laue}
	&\E |X|^{p}=
	\left\{
	\begin{alignedat}{2}
	& \frac1{\cos(\pi \la/2)} \Re\big((-1)^{\ell/2}\,f^{(p)}(0)\big)  &&\text{\ \ if $\ell$ is even}, \\ 
	&  \frac1{\sin(\pi \la/2)} \Re\big((-1)^{(\ell+1)/2}\,f^{(p)}(0)\big) &&\text{\ \ if $\ell$ is odd}.
	\end{alignedat}
	\right.
\end{align}
Note that the factors $(-1)^{\ell/2}$ for even $\ell$ and $(-1)^{(\ell+1)/2}$ for odd $\ell$ are real and hence can be moved out of the arguments of the function $\Re$ in \eqref{exp:thm:2.2:laue}. Then it will be easy to see that the right-hand side in \eqref{exp:thm:2.2:laue} is the same as that in \eqref{eq:abs-p}. 
The reference to \cite{laue:1980} was provided by M.\ Matsui \cite{muneya}. 


\subsection{On results by Zolotarev and von Bahr, and their applications}\label{zol}
Take any real $p>0$ that is not an even integer, and write it in the form $2m+q$, where $m:=\lfloor p/2\rfloor$, so that $q=p-2m\in(0,2)$. Assume that $\E|X|^p<\infty$. 

Replace all the instances of $u$, $r$, and $p$ in \eqref{eq:pin_pos} by $0$, $2m$, and $q$, respectively. At that, accordingly, $\ell=\ell(p)$ is replaced by $\tilde\ell:=\ell(q)$, which is $0$ or $1$ \big(depending on whether $q\in(0,1]$ or $q\in(1,2)$\big). Thus, one has 
\begin{equation}\label{eq:pin_pos-zol}
\E X_+^p=\frac{\Gamma (q+1)}{2 \pi\,(-1)^m} \int_0^{\infty }
\Big(\frac{(R_{\tilde\ell} f^{(2m)})(0;t)}{(it)^{q+1}}+\frac{(R_{\tilde\ell} f^{(2m)})(0;-t)}{(-it)^{q+1}}\Big) \d t.    
\end{equation}
In the latter identity, replace $X$ by $-X$ and, accordingly, $f$ by $\ov f$. 
Using now the identities $|X|^p=X_+^p+(-X)_+^p$, $f+\ov f=2\Re f$, and 
\begin{equation}\label{eq:R=R_0}
	\text{$(R_{\tilde\ell} \Re f^{(2m)})(0;\pm t)
=(R_0 \Re f^{(2m)})(0;\pm t)=\Re f^{(2m)}(t)-\Re f^{(2m)}(0)$\quad for\quad $\tilde\ell\in\{0,1\}$}
\end{equation}
and $t\in(0,\infty)$, one concludes that 
\begin{equation}\label{eq:pin_abs-zol}
\E|X|^p=2(-1)^{m+1}\frac{\Ga(q+1)}\pi\,\sin\frac{\pi q}2\, \int_0^{\infty }
\frac{\Re f^{(2m)}(t)-\Re f^{(2m)}(0)}{t^{q+1}}\, \d t;     
\end{equation}
the first equality in \eqref{eq:R=R_0} holds because, by (say) \eqref{eq:f^{(q)}=}, 
$\Re f^{(2m+1)}(0)=0$. 

Formula \eqref{eq:pin_abs-zol}, due to Zolotarev, can be found in 
\cite[page~394]{lukacs_russian}, where the factor $2(-1)^{m+1}$ is missing, though. 
A formula similar to \eqref{eq:pin_abs-zol} was implicit in \cite{zolot-mellin}, where it was used to find expressions of the absolute moments of stable laws. 

Similarly, using \eqref{eq:0} \big(which is the special case of \eqref{eq:pin_pos}, with $r=u=0$\big), one has the identity 
\begin{equation}\label{eq:pin_abs-bahr}
\E|X|^p=-\frac{2\Ga(p+1)}\pi\,\sin\frac{\pi p}2\, \int_0^{\infty }
\frac{\Re(R_\ell f)(0;t)}{t^{p+1}}\, \d t,      
\end{equation}
again for any real $p>0$ that is not an even integer, with $\ell=\ell(p)$. 
The latter identity was obtained by von Bahr \cite[Lemma~4]{bahr-converg65} in a slightly different form. In fact, von Bahr had the integral $\beta_{H;p}:=\int_{-\infty}^\infty|x|^p|\,\d H(x)$ and the Fourier--Stieltjes transform of $H$, respectively, in place of $\E|X|^p$ and $f$ in \eqref{eq:pin_abs-bahr}, where $H$ is any function of bounded variation on $(-\infty,\infty)$ with $\beta_{H;p}<\infty$. However, this formally more general result follows immediately from \eqref{eq:pin_abs-bahr}, since any function of bounded variation on $(-\infty,\infty)$ is a linear combination of two distribution functions. Identity \eqref{eq:pin_abs-bahr} was used in \cite{bahr-converg65} to provide bounds on the rate of convergence of the absolute moments of the standardized sums of independent r.v.'s to the corresponding moments of the standard normal distribution.

In the special case of $p\in(0,2)$, identity \eqref{eq:pin_abs-bahr} coincides with \eqref{eq:pin_abs-zol}. For $p\in(1,2)$, \eqref{eq:pin_abs-bahr} was used in \cite{bahr65} to obtain the famous von Bahr--Esseen (vBE) upper bound on the absolute moment of order $p$ of sums of independent zero-mean r.v.'s. More general and at that exact versions of the vBE bound were given recently in \cite{bahr-esseen-AFA_publ}.  

Identities \eqref{eq:pin_abs-zol} and \eqref{eq:pin_abs-bahr} were utilized in \cite{braverman86} and  \cite{bms95}, respectively, to characterize a probability distribution in terms of certain related absolute moments. 
For $p\in(0,2)$, identity \eqref{eq:pin_abs-zol}--\eqref{eq:pin_abs-bahr} was used in \cite{braverman93} to obtain upper and lower bounds on the absolute moments of order $p$ of sums of independent symmetric r.v.'s; those upper and lower bounds differ from each other by a factor depending only on $p$. In \cite{braverman96}, \eqref{eq:pin_abs-bahr} was applied to provide upper and lower bounds (again differing from each other by a factor depending only on $p$) on the Lorentz norm of a r.v.\ in terms of its c.f. 
Explicit values of the constant factors before the integrals in \eqref{eq:pin_abs-zol} and \eqref{eq:pin_abs-bahr} were not used in \cite{braverman86,bms95,braverman93,braverman96}. 

\subsection{Symmetric differences 
}\label{inv f'la}
Instead of derivatives of various orders, one can use symmetric
differences of the values of the c.f. 
In particular, this will show that Theorem~\ref{th:lin comb} implies certain extensions of results by Wolfe
\cite{wolfe:1973} and Gurland \cite{gurland:1948}. 

Take any $n\in\N$ and then any real $p\in\big(\frac{(-1)^n-1}2,n\big)$; the latter relation can be rewritten as  
\begin{equation}\label{eq:p,n}
	\text{$0<p<n$ if $n$ is even, and $-1<p<n$ if $n$ is odd. }
\end{equation} 
Consider the function $g$ defined by the formula 
\begin{equation}\label{eq:g_n}
g(t):=g_n(t):=(2i)^n\sin^n t=(e^{it}-e^{-it})^n
=(\De_{it}^n\exp)(0)  
\end{equation}
for all real $t$, where $\De_v^n$ is the $n$th power of the symmetric difference operator $\De_v$, defined by the formula $(\De_v\psi)(z):=\psi(z+v)-\psi(z-v)$, so that 
\begin{equation}\label{eq:De^n}
	(\De_v^n\psi)(z)=\sum_{j=0}^n(-1)^j\binom nj\psi\big(z+(n-2j)v\big) 
\end{equation}
for any function $\psi\colon\C\to\C$ and any $v$ and $z$ in $\C$.  

This function, $g=g_n$, satisfies the conditions \eqref{g=}, \eqref{p>}, and \eqref{O} of Theorem~\ref{th:lin comb}, with $q=n$; at that, all the $q_j$'s equal $0$. 
Expressions for the constants $c_p^\pm(g_n)$ are given in Appendix~\ref{c_p,n}, where is also shown that these constants are nonzero. Moreover, it is clear that the function $g_n$ is even or odd when 
$n$ is so, respectively; hence, 
\begin{equation}\label{eq:c_p^-(g_n)=}
	c_p^-(g_n)=(-1)^n c_p^+(g_n). 
\end{equation}
%
Thus, from \eqref{eq:lin comb} one immediately obtains 
%
\begin{corollary}\label{cor:g_n} 
Take any real $r$ and $u$, and any r.v.\ $X$ such that $\E|X|^r+\E|X|^{r+p}<\infty$.  
Let $f$ be the c.f.\ of $X$. 
Then 
\begin{equation}\label{eq:cor:g_n}
\E X_+^{r+p} e^{iuX}+(-1)^n e^{-i\pi r}\,\E X_-^{r+p} e^{iuX}
=	\frac1{i^r\,c^+_p(g_n)}\int_0^{\infty-}\frac{(\De_t^n f^{(r)})(u)}{t^{p+1}} \d t  
\end{equation}
\end{corollary}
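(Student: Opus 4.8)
The plan is to obtain Corollary~\ref{cor:g_n} as the direct specialization of Theorem~\ref{th:lin comb} to the function $g=g_n$ defined in \eqref{eq:g_n}, paired with the function $h(x)=\ov{x^r}\,e^{iux}$ of part \eqref{h=}. No genuinely new analysis is required beyond Theorem~\ref{th:lin comb} itself; the work is just to match parameters, recognize the resulting integrand as a symmetric difference, and exploit the parity of $g_n$.

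First I would expand $g_n$ by the binomial theorem: writing $g_n(t)=(e^{it}-e^{-it})^n=\sum_{j=0}^n(-1)^j\binom nj e^{i(n-2j)t}$, I read off the data demanded by part \eqref{g=}, namely $J=\{0,1,\dots,n\}$, coefficients $a_j=(-1)^j\binom nj$, exponents $q_j=0$, and frequencies $v_j=n-2j$. I then check conditions \eqref{g=}--\eqref{E<infty}. Condition \eqref{O} holds with $q=n$, since $\sin^n t=O(|t|^n)$ near $0$ and $n>0\vee p$ by the right-hand inequality in \eqref{eq:p,n}. Condition \eqref{p>} is precisely what dictates the range of $p$ fixed in \eqref{eq:p,n}: the frequency $v_j=n-2j$ vanishes for some $j$ exactly when $n$ is even (at $j=n/2$), which forces $p>0$, whereas for odd $n$ every frequency is nonzero and only $p>-1$ is needed. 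Finally, because every $q_j=0$, condition \eqref{E<infty} collapses to $\E|X|^{r+p}+\E|X|^r<\infty$, which is exactly the stated hypothesis; and condition \eqref{h=} holds by our choice of $h$.

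With the hypotheses verified, Theorem~\ref{th:lin comb} applies and delivers \eqref{eq:lin comb}. On its left-hand side all $q_j=0$, so the factors $(-it)^{q_j}$ drop out and the sum becomes $\sum_{j=0}^n(-1)^j\binom nj f^{(r)}(u+(n-2j)t)$, which by \eqref{eq:De^n} (with $\psi=f^{(r)}$, $z=u$, $v=t$) is exactly $(\De_t^n f^{(r)})(u)$. On the right-hand side I invoke the parity relation \eqref{eq:c_p^-(g_n)=}, namely $c_p^-(g_n)=(-1)^n c_p^+(g_n)$, to factor out $c_p^+(g_n)$ and rewrite the right side as $c_p^+(g_n)\big[\E X_+^{r+p}e^{iuX}+(-1)^n e^{-i\pi r}\E X_-^{r+p}e^{iuX}\big]$. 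Dividing through by $c_p^+(g_n)$ then yields \eqref{eq:cor:g_n} verbatim.

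The only step that is not pure bookkeeping is the legitimacy of that final division, i.e.\ that $c_p^+(g_n)\ne0$. This nonvanishing (together with a closed-form value of the constant) is precisely what is deferred to Appendix~\ref{c_p,n}, so I would simply cite it. Everything else is a mechanical transcription from Theorem~\ref{th:lin comb}, which is why I expect essentially no obstacle here beyond the careful identification of the parameters $(a_j,q_j,v_j)$ and confirming that the chosen range of $p$ is exactly the one making \eqref{p>} hold.
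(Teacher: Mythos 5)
Your proposal is correct and follows exactly the paper's own route: the paper likewise specializes Theorem~\ref{th:lin comb} to $g=g_n$ with all $q_j=0$ (so that conditions \eqref{g=}, \eqref{p>}, \eqref{O} hold with $q=n$ and \eqref{E<infty} reduces to the stated moment hypothesis), identifies the integrand as $(\De_t^n f^{(r)})(u)$ via \eqref{eq:De^n}, uses the parity relation \eqref{eq:c_p^-(g_n)=}, and defers the nonvanishing and evaluation of $c_p^+(g_n)$ to Appendix~\ref{c_p,n}. Your verification of how the zero frequency $v_{n/2}=0$ for even $n$ forces the range \eqref{eq:p,n} is a correctly worked-out detail that the paper leaves implicit.
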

%
%
and 

\begin{corollary}\label{cor:g_n,m} 
Take any $r\in\R$, $u\in\R$, and any r.v.\ $X$ such that $\E|X|^r+\E|X|^{r+p}<\infty$.  
Let $f$ be the c.f.\ of $X$.  
Take any $p\in(0,\infty)$. 
Finally, take any two natural numbers $n$ and $m$ in the interval $(p,\infty)$ such that $n$ is odd and $m$ is even. 
Then 
\begin{align}
\E X_+^{r+p} e^{iuX}
&=	\frac1{2i^r}\int_0^{\infty-}
\Big(\frac{(\De_t^m f^{(r)})(u)}{c^+_p(g_m)}+\frac{(\De_t^n f^{(r)})(u)}{c^+_p(g_n)}\Big) 
\frac{\d t}{t^{p+1}}, \label{eq:g_n,m,+,r=0} \\ 
\E X_-^{r+p} e^{iuX}
&=	\frac{e^{i\pi r}}{2i^r}\int_0^{\infty-}
\Big(\frac{(\De_t^m f^{(r)})(u)}{c^+_p(g_m)}-\frac{(\De_t^n f^{(r)})(u)}{c^+_p(g_n)}\Big) 
\frac{\d t}{t^{p+1}}. \label{eq:g_n,m,-,r=0}  
\end{align}
It follows, in particular, that 
\begin{align}
\E|X|^p e^{iuX}
&=\frac1{c^+_p(g_m)}\int_0^{\infty-}\frac{(\De_t^m f)(u)}{t^{p+1}} \d t, \label{eq:g_n,m,abs} \\ 
\E X^{[p]} e^{iuX}
&=\frac1{c^+_p(g_n)}\int_0^{\infty-}\frac{(\De_t^n f)(u)}{t^{p+1}} \d t. \label{eq:g_n,m,sign}  
\end{align}
In fact, by Corollary~\ref{cor:g_n}, the identity \eqref{eq:g_n,m,sign} holds even for any $p\in(-1,\infty)$ (and any odd $n>p$). 
\end{corollary}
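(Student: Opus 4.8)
The plan is to apply Corollary~\ref{cor:g_n} twice---once with the even number $m$ and once with the odd number $n$---and then to solve the resulting pair of linear equations for the two unknowns $\E X_+^{r+p}e^{iuX}$ and $\E X_-^{r+p}e^{iuX}$. Before doing so I would check that both applications are legitimate. Corollary~\ref{cor:g_n} requires its index to be a natural number with $p$ in the range $\big(\tfrac{(-1)^n-1}2,n\big)$ together with the moment condition $\E|X|^r+\E|X|^{r+p}<\infty$; the latter is exactly our hypothesis. Since $p\in(0,\infty)$ and both $m$ and $n$ exceed $p$, we have $0<p<m$ for the even index and $-1<0<p<n$ for the odd index, so both range conditions hold.

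Writing $A:=\E X_+^{r+p}e^{iuX}$ and $B:=\E X_-^{r+p}e^{iuX}$ and abbreviating
\[
I_\nu:=\frac1{i^r\,c^+_p(g_\nu)}\int_0^{\infty-}\frac{(\De_t^\nu f^{(r)})(u)}{t^{p+1}}\,\d t\qquad(\nu\in\{m,n\}),
\]
Corollary~\ref{cor:g_n} with the even $m$ (so $(-1)^m=1$) reads $A+e^{-i\pi r}B=I_m$, while with the odd $n$ (so $(-1)^n=-1$) it reads $A-e^{-i\pi r}B=I_n$. Adding the two equations gives $2A=I_m+I_n$ and subtracting gives $2e^{-i\pi r}B=I_m-I_n$; multiplying the latter through by $e^{i\pi r}$ yields $B=\tfrac{e^{i\pi r}}2(I_m-I_n)$. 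Because each of the limits $I_m$ and $I_n$ exists by Corollary~\ref{cor:g_n}, their linear combination may be written as a single improper integral $\int_0^{\infty-}$ of the combined integrand. Substituting the definitions of $I_m$ and $I_n$ and simplifying then gives \eqref{eq:g_n,m,+,r=0} for $A$ and \eqref{eq:g_n,m,-,r=0} for $B$.

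Finally I would read off the four displayed consequences. Setting $r=0$ (so $i^r=1$ and $e^{i\pi r}=1$) turns $A$ and $B$ into $\E X_+^p e^{iuX}$ and $\E X_-^p e^{iuX}$; since $p>0$ we have $|X|^p=X_+^p+X_-^p$ and $X^{[p]}=X_+^p-X_-^p$ pointwise, whence $\E|X|^p e^{iuX}=A+B=I_m|_{r=0}$ and $\E X^{[p]}e^{iuX}=A-B=I_n|_{r=0}$, which are exactly \eqref{eq:g_n,m,abs} and \eqref{eq:g_n,m,sign}. For the closing remark, note that the odd-index identity $A-e^{-i\pi r}B=I_n$ is produced by Corollary~\ref{cor:g_n} on its own, and for odd $n$ that corollary is valid on the full range $p\in(-1,n)$; specializing to $r=0$ gives $\E X^{[p]}e^{iuX}=I_n|_{r=0}$, so \eqref{eq:g_n,m,sign} persists for every $p\in(-1,\infty)$ with an odd $n>p$, with no even companion needed.

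There is essentially no obstacle here, since the whole argument is linear algebra applied to two instances of the preceding corollary together with the elementary identities $|X|^p=X_+^p+X_-^p$ and $X^{[p]}=X_+^p-X_-^p$. The one point that must not be overlooked is the verification that both the even and the odd index place $p$ inside the admissible range of Corollary~\ref{cor:g_n}; this is immediate from $p>0$ and $m,n>p$, but it is the only hypothesis that genuinely needs checking.
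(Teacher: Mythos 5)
Your proposal is correct and matches the paper's (implicit) proof: the paper presents this corollary as an immediate consequence of applying the basic representation with the even function $g_m$ and the odd function $g_n$ and solving the resulting pair of linear equations, which is exactly your argument via two instances of Corollary~\ref{cor:g_n}. Your verification of the admissible range of $p$ for each parity and your handling of the $r=0$ specializations (including the extension of \eqref{eq:g_n,m,sign} to $p\in(-1,\infty)$) are exactly the points the paper relies on.
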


 

In the special case $u=0$, formula
\eqref{eq:g_n,m,abs} was obtained in \cite[Theorem 2]{wolfe:1973}; however, the constant $c^+_p(g_m)$ was left unevaluated there. 


A special case of Corollary~\ref{cor:g_n}, with odd $n$ and $p=r=u=0$, is 

\begin{corollary}\label{cor:odd n,000} 
For any r.v.\ $X$, any real $x$, and any $m\in\{0,1,\dots\}$, 
\begin{equation}\label{eq:odd n,000}
\P(X\ltheq x)
=	\frac12-\frac{(-1)^m}{2\pi i}\int_0^{\infty-}(\De_t^{2m+1} f_x)(0)\, \frac{\d t}t\Big/\binom{2m}m,   
\end{equation}
where $f_x$ stands for the c.f.\ of the r.v.\ $X-x$, so that 
$f_x(t):=e^{-itx}f(t)$ for all real $x$ and $t$. 
\end{corollary}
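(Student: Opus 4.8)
The plan is to obtain \eqref{eq:odd n,000} as the announced special case of Corollary~\ref{cor:g_n}, applied not to $X$ itself but to the shifted variable $X-x$, whose c.f.\ is precisely $f_x(t)=e^{-itx}f(t)$. Concretely, I would invoke Corollary~\ref{cor:g_n} with $X$ replaced by $X-x$, with the odd integer $n=2m+1$, and with $p=r=u=0$. First I would check the hypotheses: the moment requirement $\E|X-x|^r+\E|X-x|^{r+p}<\infty$ becomes, for $r=p=0$, simply $\E|X-x|^0+\E|X-x|^0\le 2<\infty$ (using $|y|^0=\ii\{y\ne0\}$ under the conventions \eqref{eq:conventions}), so it holds for every r.v.\ $X$ and every real $x$; and $p=0$ satisfies \eqref{eq:p,n} for odd $n$.

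Next I would evaluate the two sides of \eqref{eq:cor:g_n} under these choices. On the left, since $i^r=1$, $e^{-i\pi r}=1$, and $(-1)^n=-1$, while the conventions give $y_+^0=\ii\{y>0\}$ and $y_-^0=\ii\{y<0\}$, the left-hand side collapses to
\[
\E(X-x)_+^0-\E(X-x)_-^0=\P(X>x)-\P(X<x).
\]
On the right, $f^{(r)}=f=f_x$ and $t^{p+1}=t$, so \eqref{eq:cor:g_n} reads
\[
\P(X>x)-\P(X<x)=\frac1{c^+_0(g_{2m+1})}\int_0^{\infty-}\frac{(\De_t^{2m+1}f_x)(0)}{t}\,\d t.
\]

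Then I would convert the left-hand side into the mid-distribution function. Writing $\P(X\ltheq x)=\tfrac12\big[\P(X<x)+\P(X\le x)\big]=\P(X<x)+\tfrac12\P(X=x)$ and using $\P(X>x)=1-\P(X<x)-\P(X=x)$, one gets $\P(X>x)-\P(X<x)=1-2\,\P(X\ltheq x)$. Solving for $\P(X\ltheq x)$ yields \eqref{eq:odd n,000} up to the precise value of the constant.

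The last and essentially only computational step is to evaluate $c^+_0(g_{2m+1})$. By \eqref{eq:c^pm} and \eqref{eq:g_n},
\[
c^+_0(g_{2m+1})=\int_{0+}^{\infty-}\frac{(2i)^{2m+1}\sin^{2m+1}t}{t}\,\d t
=(2i)^{2m+1}\int_0^{\infty-}\frac{\sin^{2m+1}t}{t}\,\d t,
\]
and the classical value $\int_0^{\infty-} t^{-1}\sin^{2m+1}t\,\d t=\frac{\pi}{2^{2m+1}}\binom{2m}{m}$ (obtainable from the expansion \eqref{eq:De^n} of $\sin^{2m+1}$ as a combination of $\sin\big((2m+1-2j)t\big)$, or quoted from Appendix~\ref{c_p,n}), together with $(2i)^{2m+1}=(-1)^m\,2^{2m+1}\,i$, gives the clean value $c^+_0(g_{2m+1})=(-1)^m\,\pi i\,\binom{2m}{m}$. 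Substituting $1/c^+_0(g_{2m+1})=(-1)^m\big/\big(\pi i\,\binom{2m}{m}\big)$ into the displayed identity and rearranging produces exactly \eqref{eq:odd n,000}. The only real point of care is this constant evaluation: one must note that the $\sin^{2m+1}$ integral is merely \emph{conditionally} convergent at $\infty$, which is why the outer integral must remain $\int_0^{\infty-}$ rather than a Lebesgue integral (the strong form of \eqref{p>} fails here, since $p=0=q_j$ for all $j$), and one must use the midpoint convention for $\P(X\ltheq x)$ to land on the factor $\tfrac12$ and the correct overall sign.
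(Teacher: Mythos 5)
Your proposal is correct and follows essentially the same route as the paper: specialize Corollary~\ref{cor:g_n} to the shifted r.v.\ $X-x$ with odd $n=2m+1$ and $p=r=u=0$, convert via $\P(X>x)-\P(X<x)=1-2\P(X\ltheq x)$, and plug in $c^+_0(g_{2m+1})=(-1)^m\binom{2m}m i\pi$. Your direct evaluation of the constant via $\int_0^{\infty-}\frac{\sin^{2m+1}t}{t}\d t=\frac{\pi}{2^{2m+1}}\binom{2m}m$ is exactly the content of the paper's Corollary~\ref{cor:p=0}, which the paper cites at this step, so this is a presentational rather than substantive difference.
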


To deduce Corollary~\ref{cor:odd n,000} from Corollary~\ref{cor:g_n}, note first that without loss of generality one may assume that here $x=0$. Use then the identity $\P(X>0)-\P(X<0)=1-2\P(X\ltheq0)$ and Corollary~\ref{cor:p=0}. 

Here and in the sequel, we employ the notation 
\begin{equation}\label{eq:half-eq}
\begin{alignedat}{2}
\P(X\ltheq b)&:=\P(X<b)+\tfrac12\P(X=b),\quad 
&	\P(X\gtheq b)&:=\P(X>b)+\tfrac12\P(X=b),
\\
	\ii\{a\ltheq b\}&:=\ii\{a<b\}+\tfrac12\ii\{a=b\},\quad 
&	\ii\{a\gtheq b\}&:=\ii\{a>b\}+\tfrac12\ii\{a=b\}
\end{alignedat}
\end{equation}
for real $a$ and $b$, 
so that one can read the symbol $\ltheq$ as ``less than or half-equal to'', and similarly for $\gtheq$. Such a symmetric half-and-half splitting of an atom at a given point is natural and convenient in Fourier analysis, since 
\begin{equation}\label{eq:sin int}
	\frac1{2\pi}\int_
	{-\infty+}^{\infty-}\frac{\sin tx}t \d t=\tfrac12\,\sgn x=\ii\{x\gtheq0\}-\tfrac12=\tfrac12-\ii\{x\ltheq0\}
\end{equation}
for all $x\in\R$
; cf.\ e.g.\ \cite{pitman61}. 
Of course, if $b\in\R$ is not an atom of the distribution of $X$, then $\P(X\ltheq b)=\P(X<b)=\P(X\le b)$ and $\P(X\gtheq b)=\P(X>b)=\P(X\ge b)$. 

Thus, \eqref{eq:odd n,000} may be considered as an inversion formula for the distribution function, say $F$, of the r.v.\ $X$ -- regularized so that $F(x)=\frac12\,\big(F(x+)+F(x-)\big)
=\P(X\ltheq x)$ for all $x\in\R$. 

A special case of the inversion formula \eqref{eq:odd n,000}, with $m=0$, is the following result due to Gurland \cite[(2)]{gurland:1948}:  
For any r.v.\ $X$ and any real $x$,  
\begin{equation}\label{eq:gurland48}
\P(X\ltheq x)-\frac12
=-\frac1{\pi}\int_0^{\infty-}\Im[e^{-itx}f(t)]\, \frac{\d t}t 
=-\frac1{2\pi i}\,\pv\int_{-\infty}^\infty e^{-itx}f(t)\, \frac{\d t}t,     
\end{equation}
where $\pv$ stands for ``principal value'', so that $\pv\int_{-\infty}^\infty
:=\lim_{\vep\downarrow0,\;T\uparrow\infty}
\big(\int_{-T}^{-\vep}+\int_\vep^T\big)$. 


On the other hand, one may note the following generalization of Corollary~\ref{cor:odd n,000}, which provides expressions of truncated moments of any real order $r$ in terms of the c.f. 

\begin{corollary}\label{cor:odd n,0r0} 
For any $m\in\{0,1,\dots\}$, any real $r$ and $x$, and any r.v.\ $X$ such that $\E|X|^r<\infty$, 
\begin{align*}
\E\ov{X^r}\ii\{X\ltheq x\}
=&	\frac{f^{(r)}(0)}{2i^r}-\frac{(-1)^m}{2\pi i^{r+1}}\int_{0+}^{\infty-}\big(\De_t^{2m+1}f^{(r)}_x\big)(0)\, \frac{\d t}t\Big/\binom{2m}m,  \\  
\E\ov{X^r}\ii\{X\gtheq x\}
=&	\frac{f^{(r)}(0)}{2i^r}+\frac{(-1)^m}{2\pi i^{r+1}}\int_{0+}^{\infty-}\big(\De_t^{2m+1}f^{(r)}_x\big)(0)\, \frac{\d t}t\Big/\binom{2m}m,    
\end{align*}
where $f^{(r)}_x(t):=e^{-itx}f^{(r)}(t)$ for all real $x$ and $t$. 
\end{corollary}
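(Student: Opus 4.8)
The plan is to recover the two truncated moments from their \emph{sum} and their \emph{difference}. Put $A:=\E\ov{X^r}\ii\{X\ltheq x\}$ and $B:=\E\ov{X^r}\ii\{X\gtheq x\}$. By the half-and-half convention \eqref{eq:half-eq} one has $\ii\{X\ltheq x\}+\ii\{X\gtheq x\}=1$ and $\ii\{X\gtheq x\}-\ii\{X\ltheq x\}=\sgn(X-x)=(X-x)^{[0]}$, where the atom at $x$ is split symmetrically, consistently with $0^{[0]}=0$ under \eqref{eq:conventions}. Hence $A+B=\E\ov{X^r}$ and $B-A=\E\ov{X^r}(X-x)^{[0]}$, so it suffices to evaluate these two quantities. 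The sum is immediate from \eqref{eq:f^{(q)}=} at $t=0$, namely $\E\ov{X^r}=f^{(r)}(0)/i^r$, which supplies the common term $f^{(r)}(0)/(2i^r)$ in both asserted formulas.

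For the difference I would apply Theorem~\ref{th:homo} \emph{directly}; this is precisely the anticipated exception to the standing assumption $Y=h(X)$, because the weight $\ov{X^r}$ together with the positive/negative parts of the shifted variable $X-x$ does not fit the form demanded by Theorem~\ref{th:lin comb}. Take the real r.v.\ to be $Z:=X-x$, the complex weight to be $Y:=\ov{X^r}$, the order $p:=0$, and the odd function $g:=g_{2m+1}$ from \eqref{eq:g_n}. Both hypotheses of Theorem~\ref{th:homo} reduce to $\E|X|^r<\infty$: condition \eqref{eq:cond1} since $|Z|^0\le1$, and condition \eqref{eq:cond2} since $|g_{2m+1}|\le2^{2m+1}$ is bounded. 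As $g_{2m+1}$ is odd, $c^-_0(g_{2m+1})=-c^+_0(g_{2m+1})$, so \eqref{eq:X homo} collapses to the sign-moment identity $\E\ov{X^r}(X-x)^{[0]}=\frac1{c^+_0(g_{2m+1})}\int_{0+}^{\infty-}\frac{\E\ov{X^r}g_{2m+1}(t(X-x))}{t}\,\d t$, in parallel with \eqref{eq:E YX^{[p]}}.

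Next I would identify the integrand and the constant. Expanding $g_{2m+1}$ via \eqref{eq:De^n} and using $\E\ov{X^r}e^{is(X-x)}=e^{-isx}i^{-r}f^{(r)}(s)=i^{-r}f^{(r)}_x(s)$ (a consequence of \eqref{eq:f^{(q)}=}) term by term gives $\E\ov{X^r}g_{2m+1}(t(X-x))=i^{-r}(\De_t^{2m+1}f^{(r)}_x)(0)$. For the constant, either citing Appendix~\ref{c_p,n} or computing straight from \eqref{eq:c^pm}, one finds $c^+_0(g_{2m+1})=(2i)^{2m+1}\int_0^\infty t^{-1}\sin^{2m+1}t\,\d t=\pi i\,(-1)^m\binom{2m}m$, using the classical value $\int_0^\infty t^{-1}\sin^{2m+1}t\,\d t=\frac\pi2\,4^{-m}\binom{2m}m$. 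Substituting yields $B-A=\frac{(-1)^m}{\pi i^{r+1}\binom{2m}m}\int_{0+}^{\infty-}\frac{(\De_t^{2m+1}f^{(r)}_x)(0)}{t}\,\d t$. Finally, $A=\tfrac12(A+B)-\tfrac12(B-A)$ and $B=\tfrac12(A+B)+\tfrac12(B-A)$ reproduce exactly the two claimed identities.

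The main obstacle is the bookkeeping forced by the degenerate order $p=0$ together with the conventions \eqref{eq:conventions}: one must confirm that the regularization in \eqref{eq:half-eq} makes the difference equal to $(X-x)^{[0]}$ with value $0$ assigned at the atom $X=x$, and that the weight $\ov{X^r}$ is suitably integrable and behaves correctly at $X=0$ (the assumption $\E|X|^r<\infty$ forcing $\P(X=0)=0$ when $r<0$, exactly as in the proof of Corollary~\ref{cor:E X_+^p}). The only genuinely computational step is the evaluation of $c^+_0(g_{2m+1})$, where $\binom{2m}m$ enters through the odd-power sine integral; this is likewise the origin of the $\binom{2m}m$ in Corollary~\ref{cor:odd n,000}, of which the present statement is the weighted, truncated-moment generalization.
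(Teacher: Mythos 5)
Your proof is correct and follows essentially the same route as the paper's: both apply Theorem~\ref{th:homo} (equivalently, identity \eqref{eq:E YX^{[p]}} with $p=0$) to the odd function $g_{2m+1}$, the shifted variable $X-x$, and the weight $Y=\ov{X^r}$, and then combine \eqref{eq:f^{(q)}=}, the constant $c^+_0(g_{2m+1})=(-1)^m\binom{2m}m\,i\pi$ from Corollary~\ref{cor:p=0}, and the identities $(X-x)^{[0]}=\ii\{X\gtheq x\}-\ii\{X\ltheq x\}=2\ii\{X\gtheq x\}-1=1-2\ii\{X\ltheq x\}$. Your sum-and-difference bookkeeping with $A$ and $B$ is just an equivalent repackaging of those last identities, so there is no substantive difference in method.
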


To obtain this corollary, take \eqref{eq:E YX^{[p]}} with $p=0$, replace there $g_1$ by $g_{2m+1}$ and $X$ by $X-x$, then replace $Y$ by $\ov{X^r}$, and then use \eqref{eq:f^{(q)}=}, Corollary~\ref{cor:p=0}, and the identities $(X-x)^{[0]}=\ii\{X\gtheq x\}-\ii\{X\ltheq x\}=2\ii\{X\gtheq x\}-1=1-2\ii\{X\ltheq x\}$. 
One may also note here that, similarly to Corollary~\ref{cor:odd n,000}, each of the two instances of the integral $\int_{0+}^{\infty-}$ in Corollary~\ref{cor:odd n,0r0} may be replaced by $\int_0^{\infty-}$. 

\begin{remark}
In place of the function $g_n$ as in \eqref{eq:g_n}, one can similarly consider its more general version given by the formula $g(t)=g_{n;a,b}(t):=(e^{ibt}-e^{iat})^n$ for arbitrary distinct real $a$ and $b$ and all real $t$. 
\end{remark}

\section{Computational aspects}
\label{comput}

There may be two problems with the calculation of the integral in \eqref{eq:pin_pos} (and hence with the integrals in subsequent corollaries): 
\begin{enumerate}[Problem~1:]
	\item For small values of $t$, the terms $f^{(r)}(u\pm t)$ and $-\sum_{j=0}^\ell f^{(r+j)}(u)\,\frac{(\pm t)^j}{j!}$, 
comprising $\big(R_\ell f^{(r)}\big)(u;\pm t)$ in accordance with \eqref{eq:R_m}, may nearly cancel each other. 
%
Especially when $p$ and hence $\ell$ are large, this near-cancellation effect, together with the smallness of the denominator $t^{p+1}$ of the integrand, may cause instability. 
	\item The integral in \eqref{eq:pin_pos} converges rather slowly near $\infty$, since the integrand is asymptotically proportional (as $t\to\infty$) to $1/t^{p+1-\ell}$ and the exponent $p+1-\ell$ is no greater than $2$ and may be arbitrarily close to $1$ if $\ell$ is close to $p$. 
\end{enumerate}

These two computational problems can be rather easily resolved by taking some $b\in(0,\infty)$ and writing the integral $\int_0^\infty$ in \eqref{eq:pin_pos} as $\int_0^b+\int_b^\infty$, of course with the same integrand; the choice $b=1$ appears to work well in most cases. 
The integral $\int_0^b$ can then be evaluated by a repeated integration by parts (or, more conveniently, by the Fubini theorem), according to the formula 
%
\begin{equation}\label{eq:recur}
	\int_0^b\frac{\big(\tR_{p,m}f^{(\ell-m+r)}\big)(u;t)}{t^{p+1-\ell+m}}\,\d t
	=-\frac{\big(\tR_{p,m}f^{(\ell-m+r)}\big)(u;b)}{(p-\ell+m)b^{p-\ell+m}}
	+\int_0^b\frac{\big(\tR_{p,m-1}f^{(\ell-m+r+1)}\big)(u;t)}{(p-\ell+m)t^{p-\ell+m}}\,\d t,
\end{equation}
%
%
where $m=\ell,\ell-1,\dots$ and 
$$
\big(\tR_{p,j}\psi\big)(u;t):=
\frac{\big(R_j\psi\big)(u;t)}{i^{p+1}}
   +(-1)^{\ell-j}\frac{\big(R_j\psi\big)(u;-t)}{(-i)^{p+1}},$$ 
provided that $p-\ell+m\ne0$ and 
\begin{equation}\label{eq:Fub}
\E|X|^{\ell-m+r}+\E|X|^
{p+r}+\E|X|^{\ell-m+r+1}<\infty; 	
\end{equation}
the justification of 
this application of the Fubini theorem is done similarly to that in 
the reasoning preceding Corollary~\ref{cor:pin_pos}; note here that 
for $p\ge1$ \eqref{eq:Fub} and hence \eqref{eq:recur} hold for all $m\in\{1,\dots,\ell\}$, by the condition $\E|X|^r+\E|X|^{r+p}<\infty$ and the definition of $\ell$ in \eqref{eq:ell}. 

Each application of the identity \eqref{eq:recur} reduces both (i) the degree of the near-cancellation effect in the numerator of the integrand and (ii) the degree of the smallness of the denominator of the integrand. 

As for the $\int_b^\infty$ ``part'' of the integral $\int_0^\infty$ in
\eqref{eq:pin_pos}, the terms of the integrand causing the slow
convergence near $\infty$ are power terms, proportional to
$\frac{t^j}{t^{p+1}}$ for $j\in\{0,\dots,\ell\}$, whose integrals are
most easily evaluated, since $\int_b^\infty \frac{t^j}{t^{p+1}}\,\d
t=\frac1{(p-j)b^{p-j}}$. 

Thus, 
under the conditions of Corollary~\ref{cor:pin_pos} 
with $p\ge1$,
\begin{equation}\label{eq:any m}
	\begin{aligned}
& \int_0^{\infty }
\Big(\frac{(R_\ell f^{(r)})(u;t)}{(it)^{p+1}}+\frac{(R_\ell f^{(r)})(u;-t)}{(-it)^{p+1}}\Big) \d t \\ 
&\!\!\begin{multlined}[t][.9\displaywidth]
	=\frac{\Ga (p-\ell+m)}{\Ga (p+1)} 
	\int_0^b\frac{\big(\tR_{p,m-1}f^{(\ell-m+r+1)}\big)(u;t)}{t^{p-\ell+m}}\,\d t
\\
    -\frac1{\Ga (p+1)}\sum _{j=m}^{\ell} 
   \frac{\Ga(p-\ell+j)}{b^{p-\ell+j}} 
   \big(\tR_{p,j}f^{(\ell-j+r)}\big)(u;b)\ \ 
\end{multlined}   
\\
&\ \ \ +\int_b^{\infty }
   \Big(\frac{f^{(r)}(
   u+t)}{i^{p+1}}+\frac{f^{(r)}(u-t)}{(-i)^{p+1}}\Big) \frac{\d t}{t^{p+1}}
   -\sum _{j=0}^{\ell}
   \Big(\frac{1}{i^{p+1}}+\frac{(-1)^j}{(-i)^{p+1}}\Big)
   \frac{f^{(j+r)}(u)}{j! (p-j)b^{p-j}} 	
\end{aligned}
\end{equation}
for any $b\in(0,\infty)$ and any $m\in\{1,\dots,\ell
\}$. 
In fact, representation \eqref{eq:any m} holds as well 
for any 
$p\in(0,\infty)\setminus\N$ and any $m\in\{0,-1,\dots\}$ such that $\E|X|^{\ell-m+r+1}<\infty$; 
note here that for $m\in\{-1,-2,\dots\}$ the factor $\frac1{t^{p-\ell+m}}$ in the first integral in \eqref{eq:any m} is no longer singular in $t$ \big(near $0$ or anywhere on the interval $(0,b)$\big). 
Representation \eqref{eq:any m} also holds for $m=\ell+1$, in which case it is tautological as far as the $\int_0^b$ part of the integral on the left-hand side of \eqref{eq:any m} is concerned.  

As usual, we assume here and in the sequel that the sum of any empty family is $0$, so that 
$\sum _{j=\ell+1}^{\ell}a_j=0$ and 
$\sum_{j=0}^{m-1} a_j=0$ for $m\in\{0,-1,\dots\}$ and any $a_j$'s in $\C$. 
Thus, in accordance with \eqref{eq:R_m}, the terms 
$\big(\tR_{p,m-1}f^{(\ell-m+r+1)}\big)(u;t)$ in the integrand of the integral from $0$ to $b$ in \eqref{eq:any m} can be written simply as 
$\big(\frac1{i^{p+1}}
   +\frac{(-1)^{\ell-m+1}}{(-i)^{p+1}}\big)f^{(\ell-m+r+1)}(u)$ 
for any $m\in\{0,-1,\dots\}$. 

Whereas the right-hand 
side of \eqref{eq:any m} looks much more complicated than its left-hand side, the representation \eqref{eq:any m} provides for fast and instability-free calculations. 

%




\appendix

\section{The constants \texorpdfstring{$c_p^\pm(g_n)$}{}}\label{c_p,n}
Here we shall present explicit expressions of the constants $c_p^\pm(g_n)$ for the function $g_n$ as in \eqref{eq:g_n}; at this point, recall the definitions \eqref{eq:c^pm} of $c_p^\pm(g)$. 

  

For any 
$p\in\R\setminus\Z$, let 
\begin{equation}\label{eq:ka_p}
\ka_p:=\Ga(-p)=-\frac\pi{\Ga(p+1)\sin\pi p}. 
\end{equation}


\begin{lemma}\label{lem:int e_ell}
For any $x\in\R$, any $p\in(0,\infty)\setminus\N$, 
and $\ell$ and $e_\ell$ as in \eqref{eq:ell}, 
\begin{align}\label{eq:cont:integ:a21}
\int_0^\infty \frac{e_\ell(itx)}{t^{p+1}}\d t=\ka_p\,i^p\,\ov{(-x)^p}=\ka_p\,(i\sgn x)^{-p}\,|x|^p;  
\end{align}
the latter expression is assumed to equal $0$ when $x=0$. 
Moreover, for any $x\neq 0$ and any $p\in (-1,0)$,
\begin{align}\label{eq:cont:integ:a22}
\int_0^
{\infty-} \frac{e^{itx}}{t^{p+1}}\d t =\ka_p\,i^p\,\ov{(-x)^p}=\kappa_p(i\sgn x)^{-p}|x|^p. 
\end{align} 
\end{lemma}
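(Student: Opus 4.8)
The plan is to obtain both identities by regularizing the boundary/oscillatory behaviour through a complex parameter $z$ with $\Re z>0$ and then letting $z\to -ix$. Define, for $\Re z>0$,
$$
I(z):=\int_0^\infty t^{-p-1}\Big(e^{-zt}-\sum_{j=0}^\ell\frac{(-zt)^j}{j!}\Big)\,\d t,
$$
where the empty sum (the case $\ell=-1$) is understood for \eqref{eq:cont:integ:a22}. With $s:=-p$ one has $t^{-p-1}=t^{s-1}$, and the restriction $\ell<p<\ell+1$ (which holds since $\ell=\fl p$ for $p\in(0,\infty)\setminus\N$) makes the integrand absolutely integrable on $(0,\infty)$ for every $z$ with $\Re z\ge0$: near $0$ the Taylor remainder is $O(t^{\ell+1})$, and near $\infty$ the subtracted monomials contribute $t^{j-p-1}$ with $j\le\ell<p$. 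For \eqref{eq:cont:integ:a22}, $s=-p\in(0,1)$, so $I(z)$ is the ordinary Gamma integral, needing no subtraction.

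First I would evaluate $I(z)$ for $\Re z>0$. For real $z>0$ the substitution $u=zt$ gives $I(z)=z^p I(1)$, and the standard Mittag--Leffler expansion $\Ga(s)=\sum_{j\ge0}\frac{(-1)^j}{j!(s+j)}+\int_1^\infty t^{s-1}e^{-t}\,\d t$ (with $s=-p$) identifies $I(1)=\Ga(-p)=\ka_p$; splitting $I(1)$ at $t=1$ and integrating the series $\sum_{j>\ell}(-t)^j/j!$ termwise reproduces exactly this expansion. Both $I(z)$ and $z\mapsto\ka_p z^p$ (principal branch) are analytic on $\{\Re z>0\}$ and agree on the positive axis, so $I(z)=\ka_p z^p$ throughout the right half-plane.

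Next I would pass to the boundary $z=\vep-ix$, $\vep\downarrow0$, for fixed $x\ne0$. The right-hand side tends to $\ka_p(-ix)^p$ by continuity of the principal power (the point $-ix$ is purely imaginary, hence off the negative axis). For the left-hand side of \eqref{eq:cont:integ:a21} the convergence is Lebesgue-absolute at the boundary (since $p>0$ keeps $t^{-p-1}$ integrable at $\infty$), so dominated convergence, with the uniform-in-$\vep$ bounds sketched above, yields $I(\vep-ix)\to\int_0^\infty t^{-p-1}e_\ell(itx)\,\d t$. For \eqref{eq:cont:integ:a22} only conditional convergence is available at $\infty$; here I would first check, by Dirichlet's test (the factor $t^{-p-1}$ decreasing to $0$ against bounded $\int^T e^{ixt}\,\d t$), that $\int_0^{\infty-}t^{-p-1}e^{ixt}\,\d t$ exists, and then invoke the Abelian theorem $\lim_{\vep\downarrow0}\int_0^\infty e^{-\vep t}\phi(t)\,\d t=\int_0^{\infty-}\phi(t)\,\d t$ (proved by rewriting the Laplace integral as $\vep\int_0^\infty e^{-\vep t}F(t)\,\d t$ with $F(T)=\int_0^T\phi$), thereby identifying the boundary limit with $\ka_p(-ix)^p$.

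Finally, it remains to verify the branch bookkeeping $\ka_p(-ix)^p=\ka_p i^p\,\ov{(-x)^p}=\ka_p(i\sgn x)^{-p}|x|^p$, which is routine: for $x>0$ each expression equals $\ka_p|x|^pe^{-ip\pi/2}$ and for $x<0$ each equals $\ka_p|x|^pe^{ip\pi/2}$, reading off $\arg(-x)=\pi$ for $x>0$ and $\arg(-x)=0$ for $x<0$; the case $x=0$ in \eqref{eq:cont:integ:a21} is immediate from $e_\ell(0)=0$. I expect the main obstacle to be the boundary passage for \eqref{eq:cont:integ:a22}, where absolute integrability fails and the Abelian argument together with the preliminary convergence check is genuinely needed; the remainder is the standard analytic continuation of the Gamma integral plus elementary branch computations.
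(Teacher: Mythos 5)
Your proof is correct, but it follows a genuinely different route from the paper's. The paper does not compute the integral directly at all: for \eqref{eq:cont:integ:a21} it applies the already-quoted identity \eqref{eq:0} (\cite[Corollary~2]{pinelis:2011}) to the two degenerate random variables $X=1$ and $X=-1$, which -- via \eqref{eq:e_ell=R}, \eqref{eq:c_pm,pin11}, \eqref{eq:g pin_pos}, and \eqref{eq:c^pm} -- produces the $2\times2$ linear system
\begin{align*}
	i^{-p-1}I(1)+(-i)^{-p-1}I(-1)&=2\pi/\Ga(p+1), \\
	(-i)^{-p-1}I(1)+i^{-p-1}I(-1)&=0
\end{align*}
for $I(x)=\int_0^\infty e_\ell(itx)\,t^{-p-1}\d t$, solved after reducing to $x\in\{1,-1\}$ by homogeneity; then \eqref{eq:cont:integ:a22} is deduced from the case $p\in(0,1)$ of \eqref{eq:cont:integ:a21} by a single integration by parts. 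This is a deliberate instance of the paper's general philosophy that all constant factors can be extracted from \cite[Corollary~2]{pinelis:2011} specialized to one-point distributions, and it is very short -- but it leans on that external result. You instead evaluate the integrals from scratch: the Cauchy--Saalsch\"utz/Mittag--Leffler identification $I(1)=\Ga(-p)$ of the subtracted Gamma integral, homogeneity plus the identity theorem to get $I(z)=\ka_p z^p$ on $\{\Re z>0\}$, and then the boundary passage $z=\vep-ix\downarrow -ix$, by dominated convergence for \eqref{eq:cont:integ:a21} (where $\ell<p$ makes the integral absolutely convergent on the boundary) and by the Dirichlet test plus an Abelian theorem for the only conditionally convergent \eqref{eq:cont:integ:a22}; your branch bookkeeping $\ka_p(-ix)^p=\ka_p\,i^p\,\ov{(-x)^p}=\ka_p(i\sgn x)^{-p}|x|^p$ checks out, as does the trivial case $x=0$. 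What your approach buys is self-containedness (no appeal to \cite{pinelis:2011}) and a direct, independent proof of \eqref{eq:cont:integ:a22}; what it costs is substantially more analytic machinery (analytic continuation, two separate boundary-limit arguments) where the paper needs only linear algebra and one integration by parts. You also correctly identified that the delicate point is the conditionally convergent case -- the paper sidesteps exactly that difficulty by the integration-by-parts reduction.
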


\begin{proof} 
First, note that identity \eqref{eq:cont:integ:a22} for $p\in(-1,0)$ follows immediately from \eqref{eq:cont:integ:a21} for $p\in(0,1)$ via integration by parts. 

So, it remains to verify \eqref{eq:cont:integ:a21} for $p\in(0,\infty)\setminus\N$. 
Let us denote the integral in \eqref{eq:cont:integ:a21} by $I(x)$. For $x=0$, identity \eqref{eq:cont:integ:a21} is trivial. So, by the positive homogeneity in $x$, without loss of generality one may assume that $x\in\{1,-1\}$. Moreover, clearly $I(-1)=\ov{I(1)}$. 

Combining \eqref{eq:e_ell=R}, \eqref{eq:0} for $X=1$ and $X=-1$, \eqref{eq:c_pm,pin11}, \eqref{eq:g pin_pos}, and \eqref{eq:c^pm}, one obtains the system of equations 
\begin{align*}
	i^{-p-1}I(1)+(-i)^{-p-1}I(-1)=&\,2\pi/\Ga(p+1), \\ 
	(-i)^{-p-1}I(1)+i^{-p-1}I(-1)=&\,0 
\end{align*}
for $I(1)$ and $I(-1)$, whence \eqref{eq:cont:integ:a21} for $x\in\{1,-1\}$ follows.  
\end{proof}


\begin{lemma}\label{lem:cont}
For each $n\in\N$, the constants $c_p^\pm(g_n)$ depend continuously on  $p\in\big(\frac{(-1)^n-1}2,n\big)$; here one may recall \eqref{eq:p,n}. 
\end{lemma}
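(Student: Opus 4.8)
The first reduction I would make is to note that, by the parity identity \eqref{eq:c_p^-(g_n)=}, one has $c_p^-(g_n)=(-1)^n c_p^+(g_n)$, so it suffices to prove that $p\mapsto c_p^+(g_n)=\int_{0+}^{\infty-}\frac{g_n(t)}{t^{p+1}}\,\d t$ (recall \eqref{eq:c^pm}) is continuous on the interval described in \eqref{eq:p,n}. The plan is to fix an arbitrary compact subinterval $[a,b]$ of that interval, prove continuity there, split the integral as $\int_0^1+\int_1^{\infty-}$, and treat the two pieces by different arguments.

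Near $0$ the work is routine: from \eqref{eq:g_n} and $|\sin t|\le t$ one gets the uniform bound $|g_n(t)/t^{p+1}|\le 2^n t^{\,n-p-1}\le 2^n t^{\,n-b-1}$ for $t\in(0,1)$ and $p\le b$, and $t\mapsto 2^n t^{\,n-b-1}$ is integrable on $(0,1)$ precisely because $b<n$. Since for each fixed $t$ the integrand depends continuously on $p$, dominated convergence shows that $p\mapsto\int_0^1 g_n(t)\,t^{-p-1}\,\d t$ is continuous on $(-\infty,n)$, hence on $[a,b]$.

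The tail is the crux. Here I would expand $g_n$ into pure exponentials via \eqref{eq:g_n} and \eqref{eq:De^n}, writing $g_n(t)=\sum_{j=0}^n(-1)^j\binom nj e^{\,i\omega_j t}$ with $\omega_j:=n-2j$, and handle the finitely many frequencies separately. A zero frequency $\omega_j=0$ occurs only when $n$ is even (at $j=n/2$); its contribution is $(-1)^{n/2}\binom{n}{n/2}\int_1^\infty t^{-p-1}\,\d t$, which is continuous in $p$ for $p>0$ --- exactly the lower endpoint dictated by \eqref{eq:p,n} for even $n$. For each nonzero $\omega_j$ the integral $\int_1^{\infty-}e^{\,i\omega_j t}\,t^{-p-1}\,\d t$ is only conditionally convergent when $p\le 0$, so dominated convergence cannot be applied to it directly; instead I would integrate by parts once, obtaining a boundary term plus $\frac{p+1}{i\omega_j}\int_1^\infty e^{\,i\omega_j t}\,t^{-p-2}\,\d t$, where the latter integral is now absolutely convergent and dominated on $[a,b]$ by $t^{-a-2}$ (integrable because $a>-1$, the lower endpoint in \eqref{eq:p,n} for odd $n$). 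Dominated convergence then yields continuity of this absolutely convergent remainder, while the boundary term and the prefactor $p+1$ are plainly continuous. Summing the finitely many frequency contributions gives continuity of the tail on $[a,b]$, and combining with the near-$0$ piece proves the lemma.

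I expect the only genuine obstacle to be the conditional convergence of the tail for $p\le 0$ in the odd case: the single integration by parts that trades one power of $t$ for the oscillation factor $1/(i\omega_j)$ is precisely what renders the remaining integral absolutely convergent and hence amenable to dominated convergence. A pleasant feature of this scheme is that the parity-dependent lower endpoint in \eqref{eq:p,n} becomes transparent --- it is forced entirely by the zero-frequency term, which is present only for even $n$. I would deliberately avoid routing the argument through the closed-form evaluation coming from Lemma~\ref{lem:int e_ell}, since that expression carries an apparent pole of $\Ga(-p)$ at each integer $p$, so continuity there would itself require a cancellation argument; the direct estimate above treats integer and non-integer $p$ on the same footing.
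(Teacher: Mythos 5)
Your proposal is correct and follows essentially the same route as the paper: split the integral at $t=1$, use dominated convergence near $0$ (with the same bound $2^n t^{n-p-1}$), and tame the conditionally convergent tail by a single integration by parts followed by dominated convergence, with the parity identity \eqref{eq:c_p^-(g_n)=} handling $c_p^-(g_n)$. The only cosmetic difference is bookkeeping: the paper integrates the tail by parts all at once using the bounded antiderivative $G_n(t)=\sum_{j}(-1)^j\binom nj \frac{e^{i(n-2j)t}}{i(n-2j)}$ in the odd case (and applies dominated convergence directly in the even case, where $p_1>0$), whereas you perform the same integration by parts frequency-by-frequency, which treats both parities uniformly and makes the zero-frequency origin of the endpoint condition \eqref{eq:p,n} explicit.
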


\begin{proof}
Take any real $p_1$ and $p_2$ such that $\frac{(-1)^n-1}2<p_1<p_2<n$. 

Note that $\big|\frac{g_n(t)}{t^{p+1}}\big|=2^n\big|\frac{\sin^n t}{t^{p+1}}\big|=O(t^{n-p_2-1})$ over all $p\in[p_1,p_2]$ and $t\in(0,1]$. 
Also, $\int_0^1 t^{n-p_2-1}\d t<\infty$, since $p_2<n$. 
So, by dominated convergence, 
\begin{equation}\label{eq:int_0^1}
\text{$\int_0^1 \frac{g_n(t)}{t^{p+1}}\d t$ is continuous in $p\in[p_1,p_2]$.}
\end{equation}

Concerning the ``remaining'' part, $\int_1^{\infty-}\frac{g_n(t)}{t^{p+1}}\d t$, of the integral 
\begin{equation}\label{eq:c_p(g_n)}
c_p^+(g_n)=\int_0^{\infty-}\frac{g_n(t)}{t^{p+1}}\d t, 	
\end{equation}
consider separately the two cases depending on whether $n$ is even or odd. 
If $n$ is even then $p_1>0$, and so, $\int_1^\infty t^{-p_1-1}\d t<\infty$, whereas $\big|\frac{g_n(t)}{t^{p+1}}\big|\le2^n t^{-p_1-1}$ for all $p\in[p_1,p_2]$ and $t\in[1,\infty)$. 
So, by dominated convergence, 
\begin{equation}\label{eq:int_1^infty}
\text{$\int_1^{\infty-} \frac{g_n(t)}{t^{p+1}}\d t$ is continuous in $p\in[p_1,p_2]$}
\end{equation}
-- if $n$ is even \big(in which case the limit $\int_1^{\infty-}$ in \eqref{eq:int_1^infty} could actually be written as the Lebesgue integral $\int_1^\infty$\big). 

Finally, if $n$ is odd then, by \eqref{eq:g_n} and \eqref{eq:De^n}, $g_n=G_n'$, where 
$G_n(t):=\sum_{j=0}^n(-1)^j\binom nj \frac{e^{i(n-2j)t}}{i(n-2j)}=O(1)$ over all $t\in\R$. 
So, integrating by parts, one has 
\begin{equation*}
	\int_1^{\infty-} \frac{g_n(t)}{t^{p+1}}\d t
	=G_n(1)+(p+1)\int_1^{\infty-} \frac{G_n(t)}{t^{p+2}}\d t. 
\end{equation*}
Next, $\big|\frac{G_n(t)}{t^{p+2}}\big|=O(t^{-p_1-2})$ over all $p\in[p_1,p_2]$ and $t\in[1,\infty)$. 
Also, 
$\int_1^\infty t^{-p_1-2}\d t<\infty$, since $p_1>-1$. 
So, by dominated convergence, \eqref{eq:int_1^infty} holds for odd $n$ as well. 

Thus, 
in view of \eqref{eq:c_p(g_n)}, \eqref{eq:int_0^1},  and \eqref{eq:int_1^infty}
, 
$c_p^+(g_n)$ is continuous in  $p\in\big(\frac{(-1)^n-1}2,n\big)$. 
To complete the proof of Lemma~\ref{lem:cont}, it remains to recall \eqref{eq:c_p^-(g_n)=}. 
\end{proof}

%

\begin{proposition}\label{prop:c_p(g_n)}
Take any $n\in\N$ and then any real $p\in\big(\frac{(-1)^n-1}2,n\big)$ such that $n-p$ is \emph{not} an even integer. Then 
\begin{equation}\label{eq:c_p(g_n)=}
	c^+_p(g_n)=-\frac\pi{\Ga(p+1)}\,
	\frac{\si_{n,p}}{\rho_{n,p}}, 
\end{equation}
where 
\begin{equation*}
		\rho_{n,p}:=
		\left\{
		\begin{alignedat}{2}
		&\sin(\pi p/2)&&\text{ if $n$ is even}, \\
		&i\cos(\pi p/2)\;&&\text{ if $n$ is odd} 
		\end{alignedat}
		\right.
\end{equation*}
and 
\begin{equation}\label{eq:si}
		\si_{n,p}:=\sum_{0\le j<n/2}(-1)^j\binom nj\,(n-2j)^p. 
\end{equation}
\end{proposition}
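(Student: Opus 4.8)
The goal is to compute $c^+_p(g_n)=\int_{0+}^{\infty-}\frac{g_n(t)}{t^{p+1}}\,\d t$ where $g_n(t)=(e^{it}-e^{-it})^n$. The plan is to expand $g_n$ by the binomial theorem, identify each term as an exponential $e^{i(n-2j)t}$, and integrate term-by-term against $t^{-p-1}$ using the single-exponential integral already established in Lemma~\ref{lem:int e_ell}.

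First I would write, via \eqref{eq:De^n} with $\psi=\exp$ and $z=0$, $v=it$,
\begin{equation*}
	g_n(t)=(\De_{it}^n\exp)(0)=\sum_{j=0}^n(-1)^j\binom nj e^{i(n-2j)t}.
\end{equation*}
The constant term in this sum is the one with $n-2j=0$, which occurs only when $n$ is even and $j=n/2$; since $g_n(0)=0$ as required for $g_n\in\G_p$, the exponentials must be grouped so as to exhibit the cancellation. The natural step is to pair the $j$-th and $(n-j)$-th terms, whose coefficients $(-1)^j\binom nj$ and $(-1)^{n-j}\binom n{n-j}$ agree up to the sign $(-1)^n$, and whose exponents are $\pm(n-2j)$. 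For $n$ even one also isolates the central $j=n/2$ term, which contributes $0$ to the integrand and so may be dropped.

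Next I would integrate each surviving exponential term using \eqref{eq:cont:integ:a22} (valid for $p\in(-1,0)$) or \eqref{eq:cont:integ:a21} (for $p\in(0,\infty)\setminus\N$), both of which give $\int_0^{\infty-}\frac{e^{itx}}{t^{p+1}}\,\d t=\ka_p(i\sgn x)^{-p}|x|^p$ for $x\neq0$, with $\ka_p=\Ga(-p)=-\pi/(\Ga(p+1)\sin\pi p)$ from \eqref{eq:ka_p}. For a pair with $x=\pm(n-2j)$ and $|x|=n-2j>0$ (the range $0\le j<n/2$ in \eqref{eq:si}), the two contributions combine as $\ka_p(n-2j)^p\big((i)^{-p}+(-1)^n(-i)^{-p}\big)$. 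Writing $i^{-p}=e^{-i\pi p/2}$ and $(-i)^{-p}=e^{i\pi p/2}$ and using $(-1)^n=\pm1$, this bracket collapses to $2\cos(\pi p/2)$ when $n$ is even and $-2i\sin(\pi p/2)$ when $n$ is odd — up to a uniform factor this is exactly $-2\rho_{n,p}$ after accounting for the sign convention. Summing $(-1)^j\binom nj(n-2j)^p$ over $0\le j<n/2$ reproduces $\si_{n,p}$ of \eqref{eq:si}, and substituting $\ka_p=-\pi/(\Ga(p+1)\sin\pi p)$ together with $\sin\pi p=2\sin(\pi p/2)\cos(\pi p/2)$ should yield \eqref{eq:c_p(g_n)=} after the common trigonometric factor cancels against one factor in $\sin\pi p$.

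The main obstacle I anticipate is twofold. First, the term-by-term integration requires that the integral $\int_{0+}^{\infty-}$ converge as a limit for each exponential separately; for $p\in(0,1)$ this is fine by \eqref{eq:cont:integ:a21}, but for larger $p$ the individual exponential integrals \eqref{eq:cont:integ:a21} are stated only after subtracting a Taylor polynomial $e_\ell$, so one cannot integrate $e^{i(n-2j)t}/t^{p+1}$ by itself. The clean way around this is to prove \eqref{eq:c_p(g_n)=} first for $p$ in the small range where every exponential integral converges directly (say $p\in(0,1)$, or $p\in(-1,0)$ for odd $n$ via \eqref{eq:cont:integ:a22}), obtaining an explicit analytic-in-$p$ formula, and then invoke Lemma~\ref{lem:cont}, which gives continuity of $c_p^+(g_n)$ on the whole interval $\big(\frac{(-1)^n-1}2,n\big)$: since both sides of \eqref{eq:c_p(g_n)=} are continuous (indeed real-analytic) in $p$ away from the excluded points where $n-p$ is an even integer, agreement on a subinterval forces agreement throughout by analytic continuation. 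Second, care is needed in matching the phase factors $(i\sgn x)^{-p}$ under the paper's convention $\arg\in(-\pi,\pi]$, so that the $\pm$ exponents combine into the stated $\rho_{n,p}$ with the correct sign and the spurious $i$ in the odd case lands exactly where \eqref{eq:c_p(g_n)=} places it; this is bookkeeping rather than a conceptual difficulty, but it is where sign errors are most likely.
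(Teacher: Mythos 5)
Your core computation --- expanding $g_n(t)=\sum_{j=0}^n(-1)^j\binom nj e^{i(n-2j)t}$, integrating term by term via Lemma~\ref{lem:int e_ell}, and collapsing the phase factor $i^{-p}+(-1)^n(-i)^{-p}$ into $\rho_{n,p}$ --- is exactly the paper's, and your trigonometric bookkeeping is consistent with \eqref{eq:c_p(g_n)=}. But your handling of convergence has genuine gaps. A first, local error: for even $n$ the central term $j=n/2$ is the nonzero constant $(-1)^{n/2}\binom{n}{n/2}$, not $0$, so it cannot be dropped; it is precisely what must be redistributed among the cosine pairs (turning them into $\cos((n-2j)t)-1$) to make the integrand integrable near $0$. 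Relatedly, even for $p\in(0,1)$ the single-exponential integral $\int_{0+}^{\infty-}e^{ixt}\,t^{-p-1}\,\d t$ diverges at $0$; what \eqref{eq:cont:integ:a21} gives there is the integral of $e_0(ixt)=e^{ixt}-1$, so your ``base range where everything converges directly'' already requires Taylor-corrected exponentials.

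Second, and more seriously: your extension step from a base interval to all of $\big(\frac{(-1)^n-1}2,n\big)$ does not go through as stated. Lemma~\ref{lem:cont} gives only \emph{continuity} of $c_p^+(g_n)$ in $p$, and continuity can never propagate an identity from a subinterval to a larger one. Your fallback, analytic continuation, needs (i) analyticity of $p\mapsto c_p^+(g_n)$, which is neither contained in Lemma~\ref{lem:cont} nor proved in your proposal, and (ii) a way past the excluded points where $n-p$ is an even integer: there $\rho_{n,p}=0$, so the right-hand side of \eqref{eq:c_p(g_n)=} a priori has poles; on the real line these points disconnect the domain, so real-analytic continuation from, say, $(0,1)$ stops at $p=2$ when $n$ is even. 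Crossing them would require either complex analyticity of both sides on a connected planar neighborhood, or the fact that $\si_{n,p}$ also vanishes at those points (making the singularity removable) --- a fact established only later, in the proof of Proposition~\ref{prop:c_p(g_n),compl}. The paper avoids all of this with the one observation you are missing: by \eqref{eq:poly,0}, $\De_v^n$ annihilates polynomials of degree less than $n$, and since $\ell=\ce{p-1}\le n-1$ for $p<n$, one has $g_n(t)=(\De_{it}^n e_\ell)(0)=\sum_{j=0}^n(-1)^j\binom nj\,e_\ell(i(n-2j)t)$. Each of these summands can be integrated against $t^{-p-1}$ by Lemma~\ref{lem:int e_ell} for \emph{every} $p\in(0,n)\setminus\N$ (and by \eqref{eq:cont:integ:a22} for $p\in(-1,0)$ when $n$ is odd), so the term-by-term computation is valid on the whole range of non-integer $p$ at once; Lemma~\ref{lem:cont} is then invoked only to fill in the isolated integer values of $p$ with $n-p$ odd, where mere continuity together with $\rho_{n,p}\ne0$ suffices. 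If you insert this Taylor-correction step, your argument becomes the paper's proof.
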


\begin{proof}
Consider first the case when $p\notin\N$. 
Recall \eqref{eq:De^n} and note that 
\begin{equation}\label{eq:poly,0}
	\text{$\De_v^n\psi=0$ for any polynomial $\psi$ of degree less than $n$.} 
\end{equation}
So, by \eqref{eq:g_n}, \eqref{eq:ell}, and \eqref{eq:p,n}, 
\begin{equation*}
	g_n(z)=(\De_{iz}^n e_\ell)(0)
\end{equation*}
for all $z\in\C$. 
Therefore, by \eqref{eq:c^pm}, \eqref{eq:De^n}, and Lemma~\ref{lem:int e_ell},  
\begin{align*}
	c^+_p(g_n)
	&=\ka_p\sum_{j=0}^n(-1)^j\binom nj\,\big(i\sgn(n-2j)\big)^{-p}\,|n-2j|^p \\ 
	&=\ka_p\left(i^{-p}\,\sum_{0\le j<n/2}(-1)^j\binom nj\,(n-2j)^p
	+(-i)^{-p}\,\sum_{n/2<m\le n}(-1)^m\binom nm\,(2m-n)^p\right) 
	\\ 
&=\ka_p\big(i^{-p}+(-1)^n(-i)^{-p}\big)\si_{n,p}. 
\end{align*}
So, in view of 
\eqref{eq:ka_p}, \eqref{eq:c_p(g_n)=} follows -- in the case when $p\notin\N$. 

To finish the proof, note that $\rho_{n,p}\ne0$ if $n-p$ is not an even integer. 
Thus, the general case of 
Proposition~\ref{prop:c_p(g_n)}, with any real $p\in\big(\frac{(-1)^n-1}2,n\big)$ such that $n-p$ is not an even integer, now follows by Lemma~\ref{lem:cont}.  
\end{proof}

Proposition~\ref{prop:c_p(g_n)} is complemented by 

\begin{proposition}\label{prop:c_p(g_n),compl}
Take any $n\in\N$ and then any real $p\in\big
(\frac{(-1)^n-1}2,n\big)$ 
such that $n-p$ \emph{is} an even integer. Then ($p\in
\N$ and)
\begin{equation}\label{eq:c_p(g_n)=,compl}
	c^+_p(g_n)=
	(-1)^{1+\lfloor p/2\rfloor}\,\frac2{p!}\,
	\frac{\si'_{n,p}}{\ga_n}, 
\end{equation}
where 
\begin{equation*}
		\ga_n:=
		\left\{
		\begin{alignedat}{2}
		&1&&\text{ if $n$ is even}, \\
		-&i\;&&\text{ if $n$ is odd} 
		\end{alignedat}
		\right.
\end{equation*}
and 
\begin{equation*}
		\si'_{n,p}:=\frac{\partial\si_{n,p}}{\partial p}=\sum_{0\le j<n/2}(-1)^j\binom nj\,(n-2j)^p\,\ln(n-2j). 
\end{equation*}
\end{proposition}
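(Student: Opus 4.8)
The plan is to derive Proposition~\ref{prop:c_p(g_n),compl} as the removable-singularity limit of the formula in Proposition~\ref{prop:c_p(g_n)}, using the continuity guaranteed by Lemma~\ref{lem:cont}. First I would observe that the hypotheses force $p\in\N$: since $n$ and $n-p$ are integers, $p$ is an integer, and the constraint $p\in\big(\frac{(-1)^n-1}2,n\big)$ together with the parity of $n-p$ confines $p$ to $\{2,4,\dots,n-2\}$ for even $n$ and to $\{1,3,\dots,n-2\}$ for odd $n$. In particular $p$ has the same parity as $n$, whence $\rho_{n,p}=0$: for even $n$ the exponent $p$ is even and $\sin(\pi p/2)=0$, while for odd $n$ the exponent $p$ is odd and $\cos(\pi p/2)=0$. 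This vanishing of $\rho_{n,p}$ is precisely what is excluded in Proposition~\ref{prop:c_p(g_n)}, so that formula is not directly available.

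Next I would let $p'\to p$ through values for which $n-p'$ is not an even integer. For such $p'$, Proposition~\ref{prop:c_p(g_n)} gives $c^+_{p'}(g_n)=-\frac\pi{\Ga(p'+1)}\,\frac{\si_{n,p'}}{\rho_{n,p'}}$, while Lemma~\ref{lem:cont} ensures that the left-hand side tends to the finite limit $c^+_p(g_n)$. Since $\Ga(p'+1)\to p!\ne0$ and $\rho_{n,p'}\to\rho_{n,p}=0$, finiteness of the limit forces $\si_{n,p'}\to0$; as $\si_{n,p}$ is a finite sum of the smooth functions $(n-2j)^p$, its continuity yields $\si_{n,p}=0$. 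Thus both numerator and denominator of $\si_{n,p'}/\rho_{n,p'}$ vanish at $p'=p$ and are smooth there, so l'H\^opital's rule (equivalently, a first-order Taylor expansion) gives
\[
	c^+_p(g_n)=-\frac\pi{p!}\,\frac{\si'_{n,p}}{\rho'_{n,p}},
\]
with $\si'_{n,p}$ the derivative appearing in the statement and $\rho'_{n,p}=\partial\rho_{n,p}/\partial p$.

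It then remains to compute $\rho'_{n,p}$ and tidy up the constants. Differentiation gives $\rho'_{n,p}=\frac\pi2\cos(\pi p/2)$ for even $n$ and $\rho'_{n,p}=-\frac{i\pi}2\sin(\pi p/2)$ for odd $n$; evaluating at the relevant (even, resp.\ odd) $p$ produces $\cos(\pi p/2)=(-1)^{\fl{p/2}}$, resp.\ $\sin(\pi p/2)=(-1)^{\fl{p/2}}$, and in particular $\rho'_{n,p}\ne0$, so the l'H\^opital step above is legitimate. Substituting these values and using $1/(-i)=i$ in the odd case, one checks that both cases collapse to the single expression $(-1)^{1+\fl{p/2}}\,\frac2{p!}\,\frac{\si'_{n,p}}{\ga_n}$, which is \eqref{eq:c_p(g_n)=,compl}. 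I expect the only real point of care to be the bookkeeping of the parity-dependent signs and the factor of $i$, matching $-\pi/(p!\,\rho'_{n,p})$ against $(-1)^{1+\fl{p/2}}\,2/(p!\,\ga_n)$; the analytic content, namely that the apparent singularity at $\rho_{n,p}=0$ is removable, is already delivered by Lemma~\ref{lem:cont}.
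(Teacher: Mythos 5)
Your proposal is correct and takes essentially the same route as the paper: the paper's proof likewise obtains \eqref{eq:c_p(g_n)=,compl} from Proposition~\ref{prop:c_p(g_n)} via Lemma~\ref{lem:cont} and l'Hospital's rule, using exactly the value $\frac{\partial\rho_{n,p}}{\partial p}=(-1)^{\fl{p/2}}\,\frac\pi2\,\ga_n$ that you compute. The only minor difference is in how $\si_{n,p}=0$ is established: the paper proves it directly, writing $\si_{n,p}=\frac12\sum_{0\le j\le n}(-1)^j\binom nj(n-2j)^p$ (valid since $n-p$ is even) and invoking \eqref{eq:poly,0}, whereas you deduce it indirectly from the finiteness of $\lim_{p'\to p}c^+_{p'}(g_n)$ together with $\rho_{n,p'}\to0$; both arguments are sound.
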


\begin{proof}
It is clear that $p\in
\N$, since $n\in\N$ and $n-p$ is an even integer. 
Moreover, it follows that $\rho_{n,p}=0$ and also $\si_{n,p}=\frac12\,\sum_{0\le j\le n}(-1)^j\binom nj\,(n-2j)^p=0$, by \eqref{eq:si} and \eqref{eq:poly,0}. 
So, Proposition~\ref{prop:c_p(g_n),compl} follows from Proposition~\ref{prop:c_p(g_n)} by Lemma~\ref{lem:cont} and l'Hospital's rule, since 
$\frac{\partial\rho_{n,p}}{\partial p}=(-1)^{\lfloor
 p/2\rfloor}\,\frac\pi2\,\ga_n$ at any point $p\in
 \N$. 
\end{proof}

\begin{remark}\label{rem:parity}
The respective explicit expressions for $c^-_p(g_n)$ immediately follow from Propositions~\ref{prop:c_p(g_n)} and \ref{prop:c_p(g_n),compl} and the relation \eqref{eq:c_p^-(g_n)=}. 
\end{remark}


A special case of Proposition~\ref{prop:c_p(g_n)}, with odd $n$ and $p=0$, is given in 

\begin{corollary}\label{cor:p=0}
For any $m\in\{0,1,\dots\}$ 
\begin{equation}\label{eq:p=0}
	c^+_0(g_{2m+1})=(-1)^m\,\binom{2m}m\,i\pi=-c^-_0(g_{2m+1}). 
\end{equation}
\end{corollary}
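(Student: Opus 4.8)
The plan is to obtain Corollary~\ref{cor:p=0} as the advertised special case of Proposition~\ref{prop:c_p(g_n)}, taking $n=2m+1$ (odd) and $p=0$, and then to evaluate the resulting alternating binomial sum in closed form. First I would verify admissibility: for odd $n$ the interval in Proposition~\ref{prop:c_p(g_n)} is $\big(\frac{(-1)^n-1}2,n\big)=(-1,n)$, so $p=0$ lies in it, while $n-p=2m+1$ is odd and hence not an even integer. Thus \eqref{eq:c_p(g_n)=} is in force.

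With $\Ga(1)=1$ and $\rho_{2m+1,0}=i\cos 0=i$ (the odd-$n$ case of $\rho_{n,p}$), formula \eqref{eq:c_p(g_n)=} reduces to $c^+_0(g_{2m+1})=-\pi\,\si_{2m+1,0}/i$. From the definition \eqref{eq:si}, setting $p=0$ collapses the powers $(n-2j)^p$ to $1$, so that
\begin{equation*}
	\si_{2m+1,0}=\sum_{0\le j<(2m+1)/2}(-1)^j\binom{2m+1}j=\sum_{j=0}^m(-1)^j\binom{2m+1}j.
\end{equation*}

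The one nonroutine step is recognizing the value of this partial alternating sum. I would use the classical identity $\sum_{j=0}^k(-1)^j\binom Nj=(-1)^k\binom{N-1}k$ (a quick telescoping argument via Pascal's rule), which with $N=2m+1$, $k=m$ gives $\si_{2m+1,0}=(-1)^m\binom{2m}m$. Substituting and simplifying $1/i=-i$ yields $c^+_0(g_{2m+1})=(-1)^m\binom{2m}m\,i\pi$. The remaining equality $c^+_0(g_{2m+1})=-c^-_0(g_{2m+1})$ then follows at once from the parity relation \eqref{eq:c_p^-(g_n)=}, which for odd $n$ states $c^-_p(g_n)=-c^+_p(g_n)$.

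I do not expect a real obstacle here; the only point demanding a moment's care is the evaluation of the alternating binomial sum, which is nonetheless entirely standard.
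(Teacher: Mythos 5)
Your proposal is correct and takes essentially the same route as the paper: both specialize Proposition~\ref{prop:c_p(g_n)} to $n=2m+1$, $p=0$ (so $\Ga(1)=1$, $\rho_{2m+1,0}=i$), evaluate $\si_{2m+1,0}=\sum_{j=0}^m(-1)^j\binom{2m+1}j=(-1)^m\binom{2m}m$ via Pascal's rule (the classical identity you invoke is proved in the paper by exactly that telescoping), and obtain the second equality from the parity relation \eqref{eq:c_p^-(g_n)=}.
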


\begin{proof}
Using the identity $\binom{2m+1}j=\binom{2m}j+\binom{2m}{j-1}$, write 
\begin{align*}
	\si_{2m+1,0}&=\sum_{j=0}^m (-1)^j\binom{2m+1}j \\ 
	&=\sum_{j=0}^m (-1)^j\,\Big[\binom{2m}j+\binom{2m}{j-1}\Big] \\ 
	&=\sum_{j=0}^m (-1)^j\binom{2m}j-\sum_{r=-1}^{m-1} (-1)^
	r\binom{2m}r
	=(-1)^m\,\binom{2m}m.
\end{align*}
Now the first equality in \eqref{eq:p=0} follows by Proposition~\ref{prop:c_p(g_n)}, and the second equality there holds in view of Remark~\ref{rem:parity}.  
\end{proof} 

The equality \eqref{eq:p=0} can be rewritten as 
$
	\int_0^{\infty-}\frac{\sin^{2m+1} t}t\d t=\frac\pi{2^{2m+1}}\,\binom{2m}m, 
$ 
again for any $m\in\{0,1,\dots\}$. 
A special case of this, for $m=0$, is the well-known equality 
$
	\int_0^{\infty-}\frac{\sin t}t\d t=\frac\pi2. 
$ 
The special case of Proposition~\ref{prop:c_p(g_n)} with $n=1$ (and hence $p\in(-1,1)$), which can be written as 
$
 \int_0^{\infty-} \frac{\sin t}{t^{p+1}}\d t =
 \frac{\pi/2}{\Gamma(p+1)\cos (\pi p/2)},  
$ 
is also known; see e.g.\ 
\cite[page~428]{pitman:1968}. 




We conclude this appendix by the following proposition, which shows that the constants $c_p^\pm(g_n)$ are nonzero. 
\begin{proposition}\label{prop:c ne0}
For all $n\in\N$ and $p$ as in \eqref{eq:p,n}, one has 
$
	c_{p,n}:=(\pm2i)^n c_p^\pm(g_n)>0. 
$ 
\end{proposition}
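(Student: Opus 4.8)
The claim is that $c_{p,n}:=(\pm2i)^n c_p^\pm(g_n)>0$ for all $n\in\N$ and all admissible $p$. By the parity relation \eqref{eq:c_p^-(g_n)=}, namely $c_p^-(g_n)=(-1)^n c_p^+(g_n)$, the two quantities $(2i)^n c_p^+(g_n)$ and $(-2i)^n c_p^-(g_n)=(-1)^n(-1)^n(2i)^n c_p^+(g_n)=(2i)^n c_p^+(g_n)$ coincide, so $c_{p,n}$ is well-defined (the $\pm$ choices agree) and it suffices to work with $c_p^+(g_n)$. The strategy is to plug in the explicit formulas for $c_p^+(g_n)$ established in Propositions~\ref{prop:c_p(g_n)} and \ref{prop:c_p(g_n),compl}, multiply by $(2i)^n$, and show the product is a strictly positive real number, treating the even-$n$ and odd-$n$ cases separately and, within the generic case, the subcase where $n-p$ is an even integer separately.

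\textbf{First, the generic case.} Assume $n-p$ is not an even integer and substitute \eqref{eq:c_p(g_n)=}. Then
\begin{equation*}
	c_{p,n}=(2i)^n\cdot\Big(-\frac\pi{\Ga(p+1)}\Big)\,\frac{\si_{n,p}}{\rho_{n,p}},
\end{equation*}
with $\si_{n,p}=\sum_{0\le j<n/2}(-1)^j\binom nj(n-2j)^p$ and $\rho_{n,p}=\sin(\pi p/2)$ for even $n$, $\rho_{n,p}=i\cos(\pi p/2)$ for odd $n$. Here $\Ga(p+1)>0$ since $p>-1$. The core of the argument is to verify that the combination $(2i)^n/\rho_{n,p}$ contributes the correct phase to cancel against the sign of $\si_{n,p}$, leaving a positive real. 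I would handle the two parities in parallel: for even $n=2k$ one has $(2i)^n=4^k(-1)^k$ and $\rho_{n,p}=\sin(\pi p/2)$, while for odd $n=2k+1$ one has $(2i)^n=2^n i^n$ with $i^n=i\cdot(-1)^k$, which pairs with $\rho_{n,p}=i\cos(\pi p/2)$ to give a real denominator. In each case I would reduce $c_{p,n}$ to an expression of the form $(\text{positive constant})\times\frac{(-1)^k\,\si_{n,p}}{\sin(\pi p/2)\text{ or }\cos(\pi p/2)}$ and then establish its positivity.

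\textbf{The main obstacle} is precisely showing that $(-1)^k\si_{n,p}$ and the trigonometric factor $\sin(\pi p/2)$ or $\cos(\pi p/2)$ have the same (nonzero) sign throughout the admissible $p$-range, so that their quotient is positive. This is a genuine sign/monotonicity fact about the alternating sum $\si_{n,p}$ and is the heart of the proposition; I expect it to require either a careful induction on $n$ (using $\binom{n}{j}=\binom{n-1}{j}+\binom{n-1}{j-1}$ to relate $\si_{n,p}$ to lower-order sums, mirroring the recursion exploited in the proof of Corollary~\ref{cor:p=0}) or an integral representation — for instance, recognizing that $c_p^+(g_n)=\int_{0+}^{\infty-}\frac{(2i)^{-n}\,g_n(t)}{t^{p+1}}\,\d t$ times $(2i)^n$ reduces, after the substitution $g_n(t)=(2i)^n\sin^n t$, to $\int_{0+}^{\infty-}\frac{\sin^n t}{t^{p+1}}\,\d t$, so that
\begin{equation*}
	c_{p,n}=\int_{0+}^{\infty-}\frac{\sin^n t}{t^{p+1}}\,\d t .
\end{equation*}
For even $n$ the integrand $\sin^n t/t^{p+1}$ is manifestly nonnegative, giving $c_{p,n}>0$ immediately; for odd $n$ one cannot argue pointwise, and I would instead pair consecutive positive and negative humps of $\sin^n t$ and use the monotone decay of $t^{-(p+1)}$ to show each paired contribution is positive, as in the classical Dirichlet argument for $\int_0^\infty(\sin t)/t\,\d t>0$. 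This integral route is cleaner and I would prefer it, reserving the algebraic identities \eqref{eq:c_p(g_n)=} and \eqref{eq:c_p(g_n)=,compl} merely to confirm that the even-integer-$(n-p)$ limiting values also come out positive by the continuity granted in Lemma~\ref{lem:cont}.
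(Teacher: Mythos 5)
Your preferred integral route is exactly the paper's proof: it identifies $c_{p,n}$ with $\int_0^{\infty-}\frac{\sin^n t}{t^{p+1}}\,\d t$, observes the integrand is nonnegative for even $n$, and for odd $n$ pairs successive half-period humps $I_j=\int_0^\pi\frac{|\sin u|^n}{(j\pi+u)^{p+1}}\,\d u$, which strictly decrease to $0$, so the alternating sum $(I_0-I_1)+(I_2-I_3)+\cdots$ is positive -- precisely your Dirichlet-type pairing. The opening detour through the explicit formulas of Propositions~\ref{prop:c_p(g_n)} and \ref{prop:c_p(g_n),compl} is unnecessary, and you rightly discard it in favor of the integral argument.
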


\begin{proof}
By \eqref{eq:g_n} and \eqref{eq:c_p^-(g_n)=}, 
\begin{equation*}
	c_{p,n}=\int_0^{\infty-}\frac{\sin^n t}{t^{p+1}}\,\d t. 
\end{equation*}
So, obviously $c_{p,n}>0$ if $n$ is even. 
If $n$ is odd, then $c_{p,n}=(I_0-I_1)+(I_2-I_3)+\dots$, where 
\begin{equation*}
	I_j:=(-1)^j\int_{j\pi}^{(j+1)\pi}\frac{\sin^n t}{t^{p+1}}\,\d t
	=\int_0^\pi\frac{|\sin u|^n}{(j\pi+u)^{p+1}}\,\d u, 
\end{equation*}
which is strictly decreasing in $j\ge0$ to $0$. 
So, $c_{p,n}>0$ for odd $n$ as well.  
\end{proof}

\section{On the fractional derivatives}\label{sec:appd:B}
Our definitions of the (fractional) derivative of any real order (which go back to \cite{marchaud27}) follow 
 \cite[Eq.~(2.1)]{laue:1980} 
 for $p\in(0,\infty)\setminus \N$ and
\cite{wolfe:1975a} for other cases. 
Let $g:\R\to\C$. 
Define the derivative of $g$ of order $p$ at a real point $t$ as follows. 
\begin{enumerate}[(i)]
\item For $p\in\{0\}\cup\N$, the derivative is defined as usual: $g^{(p)}(t):=\frac{\d^p g(t)}{\d t^p}$, with $g^{(0)}(t):=g(t)$. 
	\item For $p\in(0,\infty)\setminus \N$ with $k=\lfloor p \rfloor$ and
$\lambda:=p-k$, 
\begin{align}\label{eq:D^p}
g^{(p)}(t):=
-\frac1{\Ga(-\la)}
\int_{-\infty}^t \frac{g^{(k)}(t)-g^{(k)}(u)}{(t-u)^{1+\la}}\,\d u   
=\frac1{\Ga(-\la)}
\int_0^\infty \frac{g^{(k)}(t-s)-g^{(k)}(t)}{s^{1+\la}}\,\d s.  
\end{align}
\item for $p\in(-1,0)$,
\begin{align}\label{eq:D^p,-1<p<0}
 g^{(p)}(t):= \frac{1}{\Gamma(-p)}\int_{-\infty
 +}^t
 \frac{g(u)}{(t-u)^{1+p}}\d u
 =\frac1{\Ga(-p)}
\int_0^{\infty-} \frac{g(t-s)}{s^{1+p}}\,\d s.
\end{align}
\item For $p\in 
\Z\setminus 
\{0\}\setminus\N$, 
\begin{equation}\label{eq:p in Z_-}
g^{(p)}(t):=\lim_{q\downarrow p}g^{(q)}(t).  	
\end{equation}
\item For $p\in(-\infty,0)\setminus \Z$ 
\begin{equation}\label{eq:p notin Z_-}
g^{(p)}(t):=(g^{(\lceil p\rceil)})^{(p-\lceil p\rceil)}(t).  	
\end{equation}
\end{enumerate}
Of course, for the so defined fractional derivatives to exist, all the operations that need to be done in order to evaluate the corresponding expression have to be applicable. 

It will be seen that, for $g^{(p)}$ to exist for a given real $p$, it is enough to assume that 
\begin{equation}\label{eq:g=f}
	g=f,\quad \text{where $f$ is the c.f.\ of a r.v.\ $X$ with $\E|X|^p<\infty$.}
\end{equation}

Note
that part (i) of the above definition of the fractional derivative 
is different from
that of \cite{wolfe:1975a}, 
which is 
$
 \frac{d^k g^{(p-k)}(t)}{dt^k}
$
with $k= \lfloor p \rfloor$. However, under condition \eqref{eq:g=f},  
these two definitions are equivalent. 
This follows immediately from 


\begin{lemma}
\label{lem:ap:b}
Let $p$, $k$, and $\la$ be as \eqref{eq:D^p}, and let 
$f$ and $X$ be as in \eqref{eq:g=f}. Then 
\begin{equation}\label{eq:diff}
	\frac{\d^k}{\d t^k}\int_0^\infty \frac{f(t-s)-f(t)}{s^{1+\la}}\,\d s
	=\int_0^\infty \frac{f^{(k)}(t-s)-f^{(k)}(t)}{s^{1+\la}}\,\d s
\end{equation}
for all real $t$.  
\end{lemma}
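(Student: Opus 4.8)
The idea is to strip off both the outer $\frac{\d^k}{\d t^k}$ and the $s$-integration by pushing them inside the expectation that defines $f$, so that \eqref{eq:diff} reduces to a single interchange of $\int_0^\infty\d s$ with $\E$ together with the elementary rule for differentiating $t\mapsto\E Z\,e^{itX}$. Throughout I would use that, for $p\in(0,\infty)\setminus\N$ with $k=\fl p$ and $\la=p-k\in(0,1)$, every exponent $q\in[0,p]$ gives $\E|X|^q\le 1+\E|X|^p<\infty$; in particular $\E|X|^\la$, $\E|X|^k$, and $\E|X|^{k+\la}=\E|X|^p$ are finite. Since $\E|X|^k<\infty$, the integer-order derivative is the ordinary one and $f^{(k)}(t)=i^k\,\E X^k e^{itX}$ by \eqref{eq:f^{(q)}=}. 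The case $k=0$ is trivial (both sides of \eqref{eq:diff} are then literally the same integral), so the content is $k\ge1$, although the argument below is uniform in $k\ge0$.

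First I would introduce
\[
\phi(x):=\int_0^\infty\frac{e^{-isx}-1}{s^{1+\la}}\,\d s,\qquad x\in\R,
\]
and prove the one estimate that controls everything: the substitution $u=s|x|$ gives $|\phi(x)|\le\int_0^\infty\frac{|e^{-isx}-1|}{s^{1+\la}}\,\d s=C_\la\,|x|^\la$, where $C_\la:=\int_0^\infty\frac{|e^{-iu}-1|}{u^{1+\la}}\,\d u<\infty$ is finite because $|e^{-iu}-1|\le\min(2,u)$ makes the integrand $O(u^{-\la})$ at $0$ and $O(u^{-1-\la})$ at $\infty$. One could even identify $\phi(x)=\ka_\la\,i^\la\,\ov{x^\la}$ through Lemma~\ref{lem:int e_ell}, but only the bound is needed.

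Next, writing $f(t-s)-f(t)=\E e^{itX}(e^{-isX}-1)$ and $f^{(k)}(t-s)-f^{(k)}(t)=i^k\,\E X^k e^{itX}(e^{-isX}-1)$, the estimate above together with Tonelli gives $\int_0^\infty\E\frac{|e^{-isX}-1|}{s^{1+\la}}\,\d s=C_\la\E|X|^\la<\infty$ and $\int_0^\infty\E\frac{|X|^k|e^{-isX}-1|}{s^{1+\la}}\,\d s=C_\la\E|X|^p<\infty$, so Fubini applies to both and produces the closed forms
\[
\int_0^\infty\frac{f(t-s)-f(t)}{s^{1+\la}}\,\d s=\E e^{itX}\phi(X),\qquad \int_0^\infty\frac{f^{(k)}(t-s)-f^{(k)}(t)}{s^{1+\la}}\,\d s=i^k\,\E X^k e^{itX}\phi(X).
\]
Finally, since $\E|X|^j|\phi(X)|\le C_\la\E|X|^{j+\la}<\infty$ for $0\le j\le k$, dominated convergence applied to the difference quotients lets me differentiate $t\mapsto\E e^{itX}\phi(X)$ under the expectation $k$ times, so that $\frac{\d^k}{\d t^k}\E e^{itX}\phi(X)=\E(iX)^k e^{itX}\phi(X)=i^k\E X^k e^{itX}\phi(X)$; comparing with the second closed form above is exactly \eqref{eq:diff}.

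The step I expect to be the main obstacle is the behaviour near $s=0$. The Hölder bound $|f^{(k)}(t-s)-f^{(k)}(t)|\le 2^{1-\la}\,\E|X|^p\,s^\la$ only makes the integrand on the right-hand side of \eqref{eq:diff} pointwise $O(s^{-1})$ near $0$, which is \emph{not} integrable, so one cannot naively dominate-and-differentiate under the $s$-integral. The factorization through $\phi$ is what dissolves this difficulty: after Fubini the entire $s$-singularity is absorbed, once and for all, into the finite random variable $\phi(X)$ with $|\phi(X)|\le C_\la|X|^\la$, leaving only the entire function $e^{itX}$ to differentiate in $t$, an operation controlled purely by the moment $\E|X|^p<\infty$.
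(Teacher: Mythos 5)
Your proof is correct, but it takes a genuinely different route from the paper's. The paper proves \eqref{eq:diff} directly, by applying the rule of differentiation under the integral sign (Folland's Theorem~2.27(b)) $k$ times to the $s$-integral: the dominating function, uniform in $t$, is $s\mapsto \E|X|^k\,|e^{isX}-1|/s^{1+\la}$, and its integrability over $(0,\infty)$ follows by Tonelli, since
\begin{equation*}
\int_0^\infty\E|X|^k\,|e^{isX}-1|\,\frac{\d s}{s^{1+\la}}
=\E|X|^{k+\la}\int_0^\infty|e^{iv}-1|\,\frac{\d v}{v^{1+\la}}<\infty .
\end{equation*}
This is precisely where your cautionary remark overstates the obstacle: dominate-and-differentiate under the $s$-integral \emph{does} work, provided the majorant keeps the expectation inside rather than being collapsed to the crude H\"older bound $O(s^\la)$; an integrable majorant need not be a power of $s$. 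Your route instead swaps the $s$-integral with the expectation by Fubini, absorbs the singularity once and for all into $\phi(X)$ via $|\phi(X)|\le C_\la|X|^\la$, and then differentiates $t\mapsto\E e^{itX}\phi(X)$ under the expectation; every step (the Tonelli justifications, the moment bounds $\E|X|^{j+\la}<\infty$ for $j\le k$, the difference-quotient domination) is correctly handled. What each approach buys: the paper's argument is shorter and needs only the one citation; yours avoids invoking any differentiation-under-the-integral theorem for the singular integral, reducing everything to the textbook fact about differentiating characteristic-function-type expectations, and as a by-product produces the closed forms $\E e^{itX}\phi(X)$ and $i^k\E X^k e^{itX}\phi(X)$ --- which is essentially the same Fubini computation the paper performs immediately after the lemma, in item~(IV) of the verification of \eqref{eq:f^{(q)}=}. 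In effect, your proof merges the lemma with its intended application, at the cost of redoing that computation here.
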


\begin{proof}
By the known rule of differentiation under the integral sign (see e.g.\ Theorem~(2.27)(b) in  
\cite{folland}), it is enough to show that the modulus of the integrand on the right-hand side of \eqref{eq:diff} is bounded uniformly in $t\in\R$ by an integrable function. But this follows because 
$$|f^{(k)}(t-s)-f^{(k)}(t)|\le\E|X|^k\,|e^{is|X|}-1|
$$ 
for all $t\in\R$ and $s\in(0,\infty)$, and 
$$\int_0^\infty\E|X|^k\,|e^{is|X|}-1|\,
\frac{\d s}{s^{1+\la}}=\E|X|^{k+\la}\int_0^\infty|e^{iv}-1|\,
\frac{\d v}{v^{1+\la}}<\infty.$$ 
\end{proof}


We 
shall now prove identity \eqref{eq:f^{(q)}=}, assuming accordingly that 
$f$ is the c.f.\ of a r.v.\ $X$ with $\E|X|^p<\infty$.  
As mentioned before, this identity was
presented in \cite{wolfe:1975a} (as Theorem~2 therein); note that notation ${}_{-\infty}\D_t^p f$ was used in
\cite{wolfe:1975a} in place of $f^{(p)}(t)$. 
The proof of \eqref{eq:f^{(q)}=} in \cite{wolfe:1975a} was given only for $p\in(-1,0)$, and even in that case there was a gap. Therefore, we shall provide here the necessary details. 

\begin{enumerate}[(I)]
	\item First, one has to justify the 
	interchangeability of the integration and expectation in \cite[page~311, lines 2--3]{wolfe:1975a} -- that is, the equality, for $p\in(-1,0)$, under the question mark in  
\begin{align}\label{eq:?}
 \Gamma(-p)\,f^{(p)}(u) 
 = \int_{0+}^{\infty-} \frac{\E e^{i(u-t)X}}{t^{p+1}}\d t
\overset{\text{?}}= \E \int_{0+}^{\infty-} \frac{e^{i(u-t)X}}{t^{p+1}}\d t  
\end{align}
for all real $u$. However, this can be done by dominated convergence, just as in the first line of \eqref{eq:lim} in the proof of Theorem~\ref{th:homo} -- in this case, for $Y=e^{iuX}$ and the function $g\in\G_p$ given by the formula $g(t)=e^{-it}$ for real $t$. 
Also, it is clear that one can replace here $\int_{0+}^{\infty-}$ by $\int_0^{\infty-}$. 

\item The case of $p=-1$ follows immediately from that of $p\in(-1,0)$ by dominated convergence, since 
$|\ov{X^p}e^{itX}| \le |X|^{-1} \vee1$ for all $p\in(-1,0)$. 

\item To treat the case $p<-1$, proceed by induction. Assume that \eqref{eq:f^{(q)}=} holds for some negative integer $-n$ in place of $p$. Take any $p\in(-n-1,-n)$, so that $-n=\lceil p\rceil$ and $p+n\in(-1,0)$. Then 
\begin{align*}
 f^{(p)}(t) &= (f^{-n})^{(p+n)}(t) \\
 &= \frac1{\Ga(-p-n)} \int_0^{\infty-} \,\frac{i^{-n}\E\ov{X^{-n}}e^{i(t-s)X}}{s^{1+p+n}} \d s \\ 
 &= \frac1{\Ga(-p-n)}\, i^{-n}\E\ov{X^{-n}}\,e^{itX}\int_0^{\infty-} \frac{e^{-isX}}{s^{1+p+n}} \d s \\ 
 &= i^p\E\ov{X^p}e^{itX}.   
\end{align*}
The first equality here holds by the definition in \eqref{eq:p notin Z_-}, the second one by the definition in  \eqref{eq:D^p,-1<p<0} and the assumed identity in \eqref{eq:f^{(q)}=} for $-n$ in place of $p$, the third one is verified quite similarly to \eqref{eq:?}, 
and the last equality holds by \eqref{eq:cont:integ:a22}. 
Hence, \eqref{eq:f^{(q)}=} holds for $p\in(-n-1,-n)$. 

It now follows by dominated convergence that \eqref{eq:f^{(q)}=} holds for $p=-n-1$; cf.\ the above consideration of the case $p=-1$ in item (II). 

Thus, by induction, \eqref{eq:f^{(q)}=} has been verified for all real $p<-1$ and, in view of items (I) and (II), for all real $p<0$. 

\item It remains to consider the case $p\in[0,\infty)$. The case $p\in\{0\}\cup\N$ is a textbook one. So, suppose that $p\in(0,\infty)\setminus \N$. 
Then, with $k$ and $\la$ as in \eqref{eq:D^p}, 
\begin{align*}
 f^{(p)}(t) 
 &= \frac1{\Ga(-\la)}\int_0^\infty \frac{f^{(k)}(t-s)-f^{(k)}(t)}{s^{1+\la}}\,\d s \\ 
&= \frac1{\Ga(-\la)}\int_0^\infty \frac{i^k \E X^k e^{itX}(e^{-isX}-1)}{s^{1+\la}}\,\d s \\ 
 &= \frac1{\Ga(-\la)}\, i^k\E\ov{X^k}\,e^{itX}\int_0^\infty \frac{e^{-isX}-1}{s^{1+\la}} \d s \\ 
 &= i^p\E\ov{X^p}e^{itX}.   
\end{align*}
The second equality here holds by the mentioned identity in \eqref{eq:f^{(q)}=} for the nonnegative integer $k$ in place of $p$, the third one is verified again quite similarly to \eqref{eq:?}, 
and the last equality holds by \eqref{eq:cont:integ:a21} (with $0$ and $\la$ in place of $\ell$ and $p$, respectively). 
Hence, \eqref{eq:f^{(q)}=} holds for all $p\in[0,\infty)$ as well, and thus indeed for all real $p$. 
\end{enumerate}

\bibliographystyle{abbrv}
\bibliography{C:/Users/ipinelis/Dropbox/mtu/bib_files/citations12.13.12}

\end{document}